\def\dsp{\displaystyle}
\newenvironment{proof}{{\noindent
\textbf{Proof}\,\,}}{\hspace*{\fill}$\Box$\medskip}
\newtheorem{Def}{Definition} \newtheorem{Thm}{Theorem}
 \newtheorem{Lem}{Lemma}
 \newtheorem{Rem}{Remark}
\newtheorem{Prop}{Proposition} \newtheorem{Cor}{Corollary}
\def\zz{\mathbb{Z}}  \def\rr{\mathbb{R}}
\def\F{\mathcal F}  \def\G{\mathcal G}
 \def\B{\mathcal B} \def\g{\mathfrak g} \def\e
\def\ph {\varphi } \def\b {\beta }
\def\Ga {\Gamma } \def\diffeo {diffeomorphism }   
\def\diffeos {diffeomorphisms } 
 \def\skpr {skew product } \def\skprs
\def\st {such that } 
\def\man {manifold } \def\mans {manifolds } \def\om {\omega } \def\a
\def\skpr {skew product }  \def\l
\def\d {\delta }  \def\L {\Lambda }
   \def\om {\omega}    
  \def\La {\Lambda}
  \def\a{\alpha} \def\ho
\def\ga {\gamma}
\def\zz{\mathbb{Z}}  \def\rr{\mathbb{R}}
\def\F{\mathcal F} \def\G{\mathcal G} \def\B{\mathcal B}
\def\L{\mathcal L}    \def\e {\varepsilon }   \def\ph {\varphi } \def\g {\mathfrak g} \def\a {\alpha}
\def\b {\beta}  \def\ga {\gamma } \def\Ga {\Gamma }
\def\BB {\mathcal{B}} \def\diffeo {diffeomorphism }  \def\diffeos {diffeomorphisms }   \def\skpr
\def\skprs {skew products } \def\st {such that}
 \def\man {manifold } \def\mans {manifolds }
\def\om {\omega } \def\skpr {skew product } 
\def\l {\lambda } \def\L {\Lambda }     \def\ho {H\"{o}lder } 
\def\st{such that }  \def\tes{there exists }
  \def\ka{\kappa}
\def\e{\varepsilon}   
 \def\mLip{\mbox {Lip }}
 \def\hd{Hausdorff dimension }
\def\hps{Hirsch-Pugh-Shub }
\title
   {}
\title{\ho properties of perturbed skew products and Fubini
regained\footnote{\textbf{AMS Classification}:  37D30,
\textbf{Keywords}: invariant laminations, partially hyperbolic
invariant set, \ho property, Hausdorff dimension}}
\author{Yu. Ilyashenko\thanks{ The author was supported by part by the grants
NSF 0700973, RFBR 07-01-00017-à, RFFI-CNRS 050102801} \thanks
{Cornell University, US; Moscow State and Independent Universities,
Steklov Math. Institute, Moscow} and A. Negut \thanks{``Simion Stoilow''
 Institute of Mathematics of the Romanian Academy, Romania; Harvard University, US}}
\date{}
\begin{document}

\maketitle

\begin{abstract} In 2006 A. Gorodetski proved that central fibers of
perturbed skew products are \ho continuous with respect to the base
point. In the present paper we give an explicit estimate of the \ho
exponent mentioned above. Moreover, we extend the Gorodetski theorem
from the case when the fiber maps are close to the identity to a
much wider class that satisfy the so-called modified dominated splitting
condition. In many cases (for example, in the case of skew products
over the solenoid or over linear Anosov diffeomorphisms of a torus), the \ho exponent is
close to 1. This allows us in a sense to overcome the so called
Fubini nightmare. Namely, we prove that the union of central fibers
that are strongly atypical from the point of view of the ergodic
theory, has Lebesgue measure zero, despite the lack of absolute
continuity of the holonomy map for the central foliation. For that
we revisit the Hirsch-Pugh-Shub theory, and estimate the contraction
constant of the graph transform map.

\end{abstract}

\section{Introduction}
\label{sec:intro}

\subsection{Skew products and \ho continuity}\label{sub:hofub}

In this paper we study perturbations of \skprs over hyperbolic maps
in the base. Under the so-called dominated splitting condition,
these perturbations have an invariant center lamination (see
\cite{HPS}). It was proven by Gorodetski that this lamination will
be \ho continuous, \cite{G06} (see also \cite{N} for an earlier
particular result). In this paper we estimate the \ho exponent, and
prove that in some cases it can be made arbitrarily close to $1$ by
making the perturbation small enough. \\

Such a center lamination allows us to conjugate any perturbation of
a skew product with another skew product, which can be very useful
in applications. A priori, the conjugation is only a homeomorphism,
and as such it might not behave nicely with the measure. However,
the \ho property which we prove gives us some control over this
conjugation, and allows to resolve a number of measure-related
issues. \\

For example, the \ho property allows us to overcome in some sense
the Fubini nightmare. In more detail, we prove that perturbations of
a skew product over the Smale-Williams solenoid are semiconjugated
with the duplication of the circle. We prove that the fibers of this
semiconjugacy $q$ (i.e. the manifolds $q^{-1}(y), \ y \in S^1$) are
\ho in $y$ with exponent close to $1$. As a corollary, we prove that
for a set $A\subset S^1$ of points with ``strongly nonergodic
orbits'' under the duplication of the circle, the inverse image
$q^{-1}(A)$ has Lebesgue measure $0$ (Theorem~\ref{thm:fubini}
below). This is proved despite the fact that the foliation by the
fibers $q^{-1}(y), y \in S^1$ may not have an absolutely continuous
holonomy. \\

The entire Section~\ref{sec:intro} is concerned with statements of results.
In Subsection~\ref{sub:perhol} below, we introduce our Main Theorem~\ref{thm:mt}
for perturbations of skew products over arbitrary hyperbolic maps.
In the next Subsection~\ref{sub:st} we introduce Theorem~\ref{thm:st},
which improves the Main Theorem in some cases (an important example of which being the solenoid).
Two applications of Theorem~\ref{thm:st} are presented in Subsections~\ref{sub:fubini} and \ref{sub:kan}. \\

The sections after that will be concerned with proofs. In
Sections~\ref{sec:grtrans} and \ref{sec:hold} we work with
laminations and graph transform operators, culminating with the
proof of the Main Theorem~\ref{thm:mt}. Several of the results we
obtain help us in Section~\ref{sec:st}, where we prove
Theorem~\ref{thm:st}. In Section~\ref{sec:fubini}, we \emph{regain
the Fubini property of our central leaves}, thus proving
Theorem~\ref{thm:fubini}. Finally, Section~\ref{sec:app} consists of
an appendix where several technical results are proved.

\subsection{Persistence and \ho property for skew products}
\label{sub:perhol}

Throughout this paper a \emph{$C^r-$morphism} will refer to a $C^r$
map with a $C^r$ inverse. We will use this notion  both for maps of
a manifold (with or without boundary) onto itself, and for maps of a
manifold with boundary strictly into itself.  \\

Given a linear operator $A$ on a normed linear space $V$, when we
write $a\leq |A|\leq b$ we mean that $$ a|v|\leq |A(v)|\leq b|v|,
\textrm{ }\textrm{ }\textrm{ }\forall v\in V. $$ We will use this
convention repeatedly throughout the paper. \\

Let $B$ be a compact Riemannian manifold, henceforth called the
\emph{base}. Suppose $h:B\rightarrow B$ is a $C^2-$morphism with a
hyperbolic invariant subset $\Lambda \subset B$. When $h$ is onto,
we can take $B = \Lambda $. When $h$ is into, we can take $\Lambda $
to be the maximal attractor of $h$: $$\Lambda = \bigcap_{n \geq 0}
h^n(B).$$ Being hyperbolic, the map $h$ will have contracting and
expanding directions in the vicinity of $\Lambda$. Thus there exist
a Riemannian metric $d$ on $B$ and real numbers $0\leq \lambda_-\leq
\lambda<1$ and $0\leq \mu_-\leq \mu<1$, as well as a decomposition
of the tangent bundle: \begin{equation} \label{eqn:splittangent}
TB|_\Lambda=E^s\oplus E^u, \end{equation} such that $$dh:E^s
\rightarrow E^s\textrm{ }\textrm{ }\textrm{ and }\textrm{ }\textrm{
} \lambda_- \leq |dh|\leq \lambda, $$ \begin{equation}
\label{eqn:hypstructure} dh:E^u \rightarrow E^u\textrm{ }\textrm{
}\textrm{ and }\textrm{ }\textrm{ } \mu_-\leq |dh^{-1}|\leq \mu.
\end{equation} Note that if $\lambda_-=\mu_-=0$, then we get the
standard notion of hyperbolicity. \\

We assume that the bundles $E^s$
and $E^u$ are trivialized, i.e. that there exist isomorphisms over
$B$:
\begin{equation} \label{eqn:triv} \varphi^s:B \times \rr^{k}
\rightarrow E^s, \textrm{ }\textrm{ }\textrm{ } \varphi^u:B \times
\rr^l \rightarrow E^u \end{equation} for some positive integers
$k,l\geq 0$. The above is a technical condition necessary for our
proof, but we conjecture that all our results hold without it. Let
us note that it holds when $h$ is the Smale-Williams solenoid map or
any linear Anosov diffeomorphism of a torus. \\

\begin{Def} \label{def:lmhyp} An invariant set $\L$ of a map $h$
with the above properties will be called  $(\l_-, \l, \mu_-, \mu)$-
hyperbolic. \end{Def}

Take another compact manifold $M$, called the \emph{fiber}, and form
the Cartesian product $X=B\times M$. A \emph{skew product} over $h$
is defined as any $C^1-$map of the form

\begin{equation}
\label{eqn:skprod} \F:X\rightarrow X,\textrm{ }\textrm{ }\textrm{ }\F(b,m)=(h(b), f_b(m)),
\end{equation}
where $f_b(m):M \rightarrow M$ is a $C^1$ family of $C^1-$morphisms.

\begin{Def} \label{def:domsplit} We say that the skew product
\eqref{eqn:skprod} satisfies the \emph{ modified dominated splitting
condition} if

\begin{equation} \label{eqn:dom} \max\left(\max(\lambda,\mu)+
{\left| \left| \frac {\partial f_b^{\pm 1}}{\partial b}\right |
\right | }_{C^0(X)}, \ {\left| \left| \frac {\partial f_b^{\pm
1}}{\partial m}\right | \right | }_{C^0(X)}\right)=:L <
\min(\lambda^{-1}, \mu^{-1}).\end{equation} \end{Def}

Skew products are very useful in constructing dynamical systems with
various properties. However, one often wants to study generic
phenomena of dynamical systems, and therefore one also has to study
small perturbations of skew products.

\begin{Def}\label{def:rpert} Given $\rho>0$, a
\emph{$\rho-$perturbation} of the skew product \eqref{eqn:skprod} is
a $C^1-$morphism $\G:X \rightarrow X$ such that \begin{equation}
\label{eqn:perturbation} d(\G^{\pm 1}, \F^{\pm 1})_{C^1(X)}< \rho.
\end{equation} \end{Def}

Let us make a notational convention. In this paper, we will consider a fixed skew product $\F$ and a neighborhood $\Omega \ni \F$ in the $C^1-$norm. We will often be concerned with small perturbations $\G \in \Omega$ of $\F$, and with various geometric objects related to these
perturbations (such as central foliations, H\"older exponents etc). The leaves of
central foliations of the perturbed maps $\G$ are graphs of parameter dependent
maps $\beta_b$, or in other words, parameter dependent perturbations of the central fibers of $\F$. Whenever we write $||\beta_b|| = O(\rho)$, we mean that there
exists a constant $C$ depending only on $\Omega$, such that for any $\rho-$perturbation $\G \in \Omega$ the
maps corresponding to all central leaves satisfy the inequality $||\beta_b||\leq C\rho$. Thus the operator that maps $\G$ to $\beta_b$ is Lipshitz at $\F$ with constant $C$ (uniformly in $b$). We will consider other (parameter dependent)
operators and functionals defined on $\Omega$; the expression $O(\rho)$ has the same meaning for them. \\

Small perturbations of skew products are not necessarily skew
products anymore. However, in this paper we will show that they are
conjugated to skew products, and moreover the conjugation map
satisfies a \ho continuity property. \\

We will now state our main result.

\begin{Thm} [The Main Theorem] \label{thm:mt}  Consider a skew
product $\F $ as in \eqref{eqn:skprod} over a $(\l_-, \l, \mu_-, \mu
)-$ hyperbolic map in the base, satisfying the modified dominated
splitting condition. Then for small enough $\rho>0$, any
$\rho-$ perturbation $\G $ of $\F$ has the following properties: \\

a) There exists a $\G-$invariant set $Y\subset X$ and a continuous
map $p:Y\rightarrow B$ such that the diagram

\begin{equation} \label{eqn:semi}
\begin{CD} Y @>{\G}>> Y \\
@V{p}VV @V{p}VV \\
\Lambda @>{h}>> \Lambda
\end{CD}
\end{equation}
commutes. Moreover, the map
\begin{equation}
\label{eqn:homeo1}
H: Y \to \Lambda \times M, \textrm{ }\textrm{ }\textrm{ } H(b,m)=(p(b,m),m)
\end{equation}
is a homeomorphism. \\

b) The fibers $W_b = p^{-1}(b)$ are  Lipschitz close to vertical
(constant) fibers, and \ho continuous in $b$.  This means that $W_b$
is the graph of a Lipschitz map $\widetilde{\beta}_b:M \to B$ such that
\begin{equation}
\label{eqn:fibers}
d(\widetilde{\beta}_b,b)_{C^0} \leq O(\rho), \textrm{ }\textrm{ }\textrm{ }\mLip \widetilde{\beta}_b \leq O(\rho)
\end{equation}

\begin{equation}    \label{eqn:hol}
d(\widetilde{\beta}_b,\widetilde{\beta}_{b'})_{C^0} \le \frac {d(b , b' )^{\a-O(\rho)}}{O(\rho)^\alpha},
\end{equation}
where
\begin{equation}
\label{eqn:defalpha}
\alpha=\min\left(\frac {\ln \lambda}{\ln \lambda_-}, \frac {\ln
\mu}{\ln \mu_-}\right).
\end{equation}
Moreover, the map $H^{-1}$ is also \ho continuous, with the same $\alpha$.
\end{Thm}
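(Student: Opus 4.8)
The plan is to realize the invariant set $Y$ and the fibers $W_b$ as the image of an invariant section of a bundle of admissible graphs over $\Lambda$, built by the Hirsch--Pugh--Shub graph transform machinery, and then to extract quantitative \ho estimates from the contraction constants of that graph transform. First I would set up the space of candidate fibers: for each $b\in\Lambda$ consider Lipschitz maps $\beta_b:M\to B$ with $d(\beta_b,b)_{C^0}$ and $\mLip\beta_b$ small, i.e. graphs that are ``nearly vertical''. The modified dominated splitting condition \eqref{eqn:dom} together with $\rho$ small guarantees that $\G$ maps such a graph over $h^{-1}(b)$ to a graph over $b$ that is again admissible, and that the induced graph transform $\Gamma_\G$ is a contraction on the fiberwise sup-metric. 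Its unique fixed point is a family $\{\widetilde\beta_b\}$; setting $W_b=\mathrm{graph}\,\widetilde\beta_b$, $Y=\bigcup_b W_b$, and $p|_{W_b}\equiv b$ gives part (a): commutativity of \eqref{eqn:semi} is immediate from $\G$-invariance of the family, and $H$ in \eqref{eqn:homeo1} is a homeomorphism because on each fiber $M$ the second-coordinate projection is a bijection onto $M$ (the graphs are sections over $M$), while continuity of $p$ is exactly the $C^0$-continuity of $b\mapsto\widetilde\beta_b$, which follows from the next step.

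For part (b), the bounds $d(\widetilde\beta_b,b)_{C^0}\le O(\rho)$ and $\mLip\widetilde\beta_b\le O(\rho)$ come from tracking how far $\G$ pushes the vertical fiber and how much it tilts it: each application of $\G$ adds a term controlled by $d(\G,\F)_{C^1}<\rho$ to the displacement and to the slope, and these terms form a geometric series with ratio $<1$ by \eqref{eqn:dom}, so the fixed point inherits an $O(\rho)$ bound. For the \ho estimate \eqref{eqn:hol}, I would compare $\widetilde\beta_b$ and $\widetilde\beta_{b'}$ by pulling both back along the orbits of $b$ and $b'$ under $h^{-1}$. Here the hyperbolicity constants $\lambda_-\le|dh|\le\lambda$ on $E^s$ and $\mu_-\le|dh^{-1}|\le\mu$ on $E^u$ enter: distances in the base contract under $h^{-1}$ at rate at least $\mu_-$ (in the unstable direction) and at least $\lambda_-$ when measured the other way; matching the number of iterates needed to bring $d(b,b')$ down to scale $O(\rho)$ against the contraction $\le L^n\cdot(\text{slope bound})$ of the graph transform in the fiber direction yields an exponent of the form $\alpha=\min(\ln\lambda/\ln\lambda_-,\,\ln\mu/\ln\mu_-)$, with the $O(\rho)$ corrections in the exponent and the $O(\rho)^{-\alpha}$ prefactor coming from the $\rho$-sized distortion of the nearly-vertical graphs. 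The claim that $H^{-1}$ is \ho with the same $\alpha$ then follows since $H^{-1}(b,m)=(\gamma,m)$ where $\gamma$ solves $\widetilde\beta_b$-type relations, and the \ho dependence of the fiber family transfers to $H^{-1}$ by the implicit-function/Lipschitz-inverse estimates for nearly-vertical graphs.

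The main obstacle I expect is the \ho estimate \eqref{eqn:hol}: controlling $d(\widetilde\beta_b,\widetilde\beta_{b'})_{C^0}$ requires comparing two graph-transform orbits whose base points $b,b'$ separate under $h^{-1}$ at different rates along $E^s$ and $E^u$, and one must interpolate between the slow rates $\lambda_-,\mu_-$ and the fast rates $\lambda,\mu$ correctly to land on the stated exponent rather than a weaker one. Concretely, the delicate point is to choose, for given $b,b'$, the optimal number of backward iterates $n\sim \log d(b,b')/\log(\text{rate})$ and to show that the accumulated fiber error is bounded by $(\text{contraction})^n$ times a constant, uniformly over $\Omega$; the mismatch between the two hyperbolicity rates is what forces the $\min$ in \eqref{eqn:defalpha}. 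A secondary technical nuisance is that the perturbation $\G$ need not preserve the product structure, so all of the above must be done with $\G$-orbits of \emph{graphs} rather than of points, using the trivializations \eqref{eqn:triv} to make sense of ``slope'' globally; keeping the $O(\rho)$ bookkeeping honest through this is where most of the routine work lies.
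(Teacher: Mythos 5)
There is a genuine gap, and it is at the very first step of your part (a). You propose to obtain the central fibers as the fixed point of a single graph transform acting directly on near-vertical graphs $\beta_b:M\to B$, and you assert that this transform is a contraction in the fiberwise sup-metric. It is not: the base dynamics is hyperbolic, so when $\G$ pushes forward a graph over $h^{-1}(b)$ to a graph over $b$, the displacement from the vertical fiber is \emph{expanded} by a factor up to $\mu_-^{-1}>1$ in the $E^u$ directions (and, symmetrically, $\G^{-1}$ expands displacements in the $E^s$ directions by up to $\lambda_-^{-1}$). Neither the forward nor the backward transform on central graphs contracts, so the fixed-point argument you invoke does not start. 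This is exactly why the paper (following Hirsch--Pugh--Shub) constructs \emph{two} laminations in local charts: center-stable leaves, graphs of $\beta^s_b:Q^s\times M\to Q^u$, transformed via $\G^{-1}$ with contraction rate $\mu+O(\delta)$ (Lemma~\ref{lem:1}), and center-unstable leaves transformed via $\G$ with rate $\lambda+O(\delta)$; the central leaf is then $W_b=W^s_b\cap W^u_b$, obtained by solving the contracting system \eqref{eqn:system}. (A Perron-type two-sided fixed point along full orbits would be an admissible alternative, but you do not propose it.) A second omission in (a): for $H$ to be injective you need $W_b\cap W_{b'}=\emptyset$ for $b\neq b'$, which the paper derives from expansivity of $h$ (Proposition~\ref{prop:disjointfibers}); your remark that each graph is a section over $M$ only gives bijectivity within a single fiber, not across distinct base points.

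The same structural ingredient is missing from your treatment of \eqref{eqn:hol}. Comparing $\widetilde{\beta}_b$ and $\widetilde{\beta}_{b'}$ by pulling both back along $h^{-1}$-orbits meets the obstruction above (backward iteration expands stable separations), so the paper instead routes the comparison through the local product point $b^*\in V^u_b\cap V^s_{b'}$, proves that center-stable leaves literally coincide along stable manifolds ($W^s_{b'}\subset W^s_{b^*}$, and dually for $u$), and iterates the graph-transform contraction \eqref{eqn:contWs} forward exactly $k\approx\log_{\mu_-}\bigl(d(b,b^*)/\delta\bigr)$ times, the largest number of iterates keeping the base points $\delta$-close; this yields $d(W^s_b,W^s_{b^*})\le d(b,b^*)^{\ln\mu/\ln\mu_--O(\delta)}\cdot O(1)$, with the symmetric estimate in $\lambda,\lambda_-$ for the $u$-leaves, and the estimate is transferred to the central fibers through the contracting intersection system and the product-structure inequality \eqref{eqn:prod}. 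Your heuristic (slow rate counts the iterates, fast rate gives the contraction, hence the minimum in \eqref{eqn:defalpha}) is correct, but the contraction constant per step is $\mu+O(\delta)$ resp.\ $\lambda+O(\delta)$ from Lemma~\ref{lem:1}, not $L^n$ times a slope bound --- note $L$ may exceed $1$ under \eqref{eqn:dom} --- and the comparison of leaves attached to different base points requires reconciling the different charts $\varphi_b$ and $\varphi_{b^*}$, which the paper handles by defining the chart at $b^*$ on a three-times-larger ball. Without the two-lamination construction, the coincidence of leaves along (un)stable manifolds, and the disjointness argument, both halves of your proposal remain unsupported.
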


\begin{Rem} Let us first make a remark about the exponent $\alpha$.
In many cases (e.g. when $h$ is the solenoid map or a linear Anosov
diffeomorphism of a torus), it may happen that $\lambda_-=\lambda$
and $\mu_-=\mu$. In that case, in the above theorem we have
$\alpha=1$, and thus the \ho exponent can be made arbitrarily close
to 1 by making $\rho$ small enough. \end{Rem}

\begin{Rem}
If $\Lambda=B$ (which would require $h$ to be surjective) the invariant set $Y$ equals the entire phase space $X$.
This may be proven in similar fashion to Proposition~\ref{prop:whole phase space} below.
\end{Rem}

Let us explain the usefulness of this Theorem. Quite often, one may
use skew products $\F$ to exhibit various dynamical or ergodic
phenomena (see \cite{GI99}, \cite{GI00}, \cite{GIKN}, \cite{IKS08},
\cite{DG}). One would like to prove the same properties for small
perturbations $\G$ of $\F$, but $\G$ is a priori not a skew product
anymore. However, letting $G=H\circ \G|_Y\circ H^{-1}$, statement a)
of the above theorem implies that $G:\Lambda \times M \rightarrow
\Lambda \times M$ is indeed a skew product: $$ G(b,m)=(h(b),
g_b(m)). $$ One can then study the dynamical properties of the more
mysterious map $\G|_Y$ by studying the dynamical properties of its
conjugate skew product $G$. \\

The fiber maps $g_b$ of the skew product $G$ are $C^1$-close to
those of the skew product $\F|_Y$, in the following sense:
\begin{equation} \label{eqn:close} d(g_b^{\pm 1},f_b^{\pm 1})_{C^1}
\leq O(\rho). \end{equation}

But what can be said about the fiber maps $g_b$ for different $b$'s? Since $\F$ is a $C^1-$morphism,
the fiber maps $f_b$ depend in a $C^1$ fashion on
the point $b\in \Lambda$. Such a result fails for the fiber maps $g_b$,
but statement b) of Theorem~\ref{thm:mt} implies that the
fiber maps $g_b$ depend \ho continuously on the point $b\in \Lambda$:
\begin{equation}
\label{eqn:holfibermaps}
d(g_b^{\pm 1},g_{b'}^{\pm 1})_{C^0} \leq O(d(b,b')^\a),
\end{equation}
where $\a$ is given by \eqref{eqn:defalpha}. A skew product $G$ whose fiber
maps satisfy \eqref{eqn:holfibermaps} will be called a \emph{\ho
skew product}. Thus Theorem~\ref{thm:mt} can be summarized as
follows:

\begin{center} \textbf{Let $\G$ be any small perturbation of a
$C^2$ skew product $\F$ over a $(\l_-, \l, \mu_-, \mu)-$ hyperbolic
map $h$, satisfying the modified dominated splitting
condition \eqref{eqn:dom}. Then $\G$ has an invariant set $Y$ such that the restriction of
$\G|Y$ is conjugated to a \ho skew product
close to $\F|\L \times M$, in the sense of \eqref{eqn:defalpha},
\eqref{eqn:close} and \eqref{eqn:holfibermaps}.} \end{center}

\subsection{The solenoid case}
\label{sub:st}

In this section we will present a partial improvement of
Theorem~\ref{thm:mt} that is inspired by the example of the
Smale-Williams solenoid. Let us begin by introducing and describing
the solenoid map. Fix constants $R\geq 2$ and $\lambda<0.1$, whose
particular values will not be important. Let $B$ denote the solid
torus $$ B = S^1 \times D, \textrm{ where }\ S^1 = \{y \in \rr
/\zz\}, \ D = \{z \in \mathbb C| |z| \le R \}. $$ The \emph{solenoid
map} is defined  as \begin{equation} \label{eqn:sol} h:B \rightarrow
B, \textrm{ }\textrm{ }\textrm{ } h(y, z) = (2y, e^{2\pi i y} +
\lambda z). \end{equation} The maximal attractor of the solenoid
map: $$ \Lambda = \bigcap_{k=0}^\infty h^k(B) $$ is called the
\emph{Smale-Williams solenoid}. It is a hyperbolic invariant set
with contraction coefficient $\lambda$ and expansion coefficient
$\mu^{-1} = 2$ (we take the sup norm in $T_bB$ in the
coordinates $y,z$). Moreover, the estimates in
\eqref{eqn:hypstructure} hold with $\lambda=\lambda_-$ and
$\mu=\mu_-$. \\

We can generalize the above to the following setup: let $B=Z\times
F$ be the product of two compact Riemannian manifolds; $F$ and $B$
may be \mans with boundary. We suppose that $h:B \rightarrow B$ is a
\skpr itself,  i.e. there exists a $C^2-$morphism
$\zeta:Z\rightarrow Z$ such that the following diagram commutes:
\begin{equation} \label{eqn:CD} \begin{CD} B @>{h}>> B \\ @V{\pi}VV
@V{\pi}VV \\ Z @>{\zeta}>> Z , \end{CD} \end{equation} where $\pi$
is the standard projection. We assume that the map $\zeta$
downstairs is expanding, and that the fibers $\{z\}\times F$ are the
stable manifolds of $h$: \begin{equation} \label{eqn:zetaexp}
\mu_-\leq |d\zeta^{-1}|\leq \mu, \end{equation} $$ \lambda_-\leq
|dh|\leq \lambda \textrm{ }\textrm{ }\textrm{ on } \textrm{
}\textrm{ } T(\{z\} \times F) \textrm{ ,}\forall z\in Z. $$ Again,
for technical reasons we assume that $E^s=TF$ is trivialized as in
\eqref{eqn:triv}. In this setup, Theorem~\ref{thm:mt} can be
partially improved by the following result.

\begin{Thm} \label{thm:st} Consider a skew product $\F $ as above
that also satisfies the modified dominated splitting condition.
Then for small enough $\rho>0$, any $\rho-$perturbation $\G $ of
$\F$ has the following properties: \\

a) There exists a continuous map $q : X \to Z$ such that the diagram
\begin{equation}
\label{eqn:semisol}
\begin{CD}
X @>{\G}>> X \\
@V{q}VV @V{q}VV \\
Z @>{\zeta}>> Z
\end{CD}
\end{equation}
commutes. Moreover, the commutative diagrams \eqref{eqn:semi} and
\eqref{eqn:semisol} must be compatible, in the sense that
$q|_Y=\pi \circ p$.  \\

b)The fibers $W^s_z=q^{-1}(z)$ are Lipschitz close to the vertical (constant)
fibers, and \ho continuous in $z$. This means that $W^s_z$ is the
graph of a Lipschitz map $\beta^s_z:F\times M \rightarrow Z$ such that
\begin{equation}
\label{eqn:smallcs}
d(\beta^s_z,z)_{C^0}\leq O(\rho), \textrm{ }\textrm{ }\textrm{ }\mLip \beta^s_z \leq O(\rho)
\end{equation}

\begin{equation}    \label{eqn:holdcs}
d(\beta^s_z ,\beta^s_{z'})_{C^0} \le \frac {d(z,z')^{\a-O(\rho)}}{O(\rho)^\alpha},
\end{equation}
where $\a=\dsp \frac {\ln \mu}{\ln \mu_-}$.
\end{Thm}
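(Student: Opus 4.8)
The plan is to re-run the graph transform machinery of Sections~\ref{sec:grtrans} and \ref{sec:hold}, but with a different choice of ``base'' and ``fiber'': now the base is $Z$ with the expanding map $\zeta$, and the fiber is $N:=F\times M$. Write a point of $X=Z\times F\times M$ as $(z,w)$ with $w=(\xi,m)\in N$. For the unperturbed $\F$ the $Z$-component of $\F(z,w)$ equals $\zeta(z)$, so $\F$ sends the slice $\{z\}\times N$ into $\{\zeta(z)\}\times N$ and acts there by a map of $N$ whose $w$-differential has norm at most $\max(\l,\|\partial f_b/\partial m\|_{C^0})\le L$ (the $\xi$-block contracts by $\le\l$, the $m$-block by $\le L$), while $\zeta$ expands, $\mu^{-1}\le|d\zeta|\le\mu_-^{-1}$ by \eqref{eqn:zetaexp}. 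The modified dominated splitting condition \eqref{eqn:dom} says precisely that $L<\mu^{-1}$, i.e.\ the $Z$-expansion strictly dominates \emph{every} rate in the $N$-direction, including the coupling term $\partial f_b/\partial b$. This is exactly the hypothesis under which the graph transform of Section~\ref{sec:grtrans} produces a unique $\G$-invariant family of graphs $W^s_z=\{(\beta^s_z(w),w):w\in N\}$, with $\beta^s_z:N\to Z$ Lipschitz and $C^0$-close to the constant map $w\mapsto z$, and $\G(W^s_z)\subseteq W^s_{\zeta(z)}$. The usual contraction estimate for this transform gives \eqref{eqn:smallcs} and the continuity of $z\mapsto\beta^s_z$; once the $W^s_z$ are known to partition $X$ (see below), they define a continuous map $q:X\to Z$, $q^{-1}(z)=W^s_z$, with $q\circ\G=\zeta\circ q$ --- the commutation \eqref{eqn:semisol}.

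The partition claim is proved as in Proposition~\ref{prop:whole phase space}: the $W^s_z$ are $O(\rho)$-Lipschitz perturbations of the disjoint slices $\{z\}\times N$ that already partition $X$, so a degree/continuity argument keeps them disjoint and covering; this is what lets $q$ be defined on all of $X$, not just on $Y$. The compatibility $q|_Y=\pi\circ p$ follows from uniqueness of semiconjugacies over an expanding base: by \eqref{eqn:CD} and \eqref{eqn:semi} the map $\pi\circ p:Y\to Z$ also intertwines $\G|_Y$ with $\zeta$ and stays $O(\rho)$-close to the $Z$-coordinate, so if it disagreed with $q$ at some point, iterating forward would magnify the discrepancy by $\zeta$ while both maps remain $O(\rho)$-close to the $Z$-coordinate of the orbit, a contradiction.

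The substantive point is the \ho bound \eqref{eqn:holdcs}, for which I would repeat the telescoping argument of Section~\ref{sec:hold} in these coordinates. Fix nearby $z,z'\in Z$ and $w\in N$, put $x=(\beta^s_z(w),w)$, $x'=(\beta^s_{z'}(w),w)$, and write $\G^k(x)=(z_k,w_k)$, $\G^k(x')=(z'_k,w'_k)$, so that $\delta_0:=d(w_0,w'_0)=0$ while $\Delta_0:=d(z_0,z'_0)=d(\beta^s_z(w),\beta^s_{z'}(w))$ is the quantity to estimate (its supremum over $w$ is the left side of \eqref{eqn:holdcs}); since $\G^k(x)\in W^s_{\zeta^k z}$, the point $z_k$ stays $O(\rho)$-close to $\zeta^k z$ for all $k$. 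The $Z$- and $N$-components of $\G$ yield a coupled pair of inequalities, $\Delta_{k+1}\ge(\mu^{-1}-O(\rho))\Delta_k-O(\rho)\,\delta_k$ and $\delta_{k+1}\le O(1)\,\Delta_k+(L+O(\rho))\,\delta_k$, the $O(1)$ cross-coefficient reflecting the $Z$-dependence of the $N$-dynamics in both $h$ and the fiber maps; since $L+O(\rho)<\mu^{-1}-O(\rho)$ and $\rho$ is small, this $2\times2$ system is hyperbolic with $\Delta$ as the dominant coordinate, and solving the two-point problem ``$\delta_0=0$ at time $0$, $\Delta_T=O(\rho)$ at the stopping time $T$'' gives $\Delta_0\le C\rho\,(\mu+O(\rho))^{T}$. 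Taking $T$ to be the first time $d(\zeta^T z,\zeta^T z')$ exceeds $\rho$, the two-sided bounds $\mu^{-k}d(z,z')\le d(\zeta^k z,\zeta^k z')\le\mu_-^{-k}d(z,z')$ force $T\ge\ln(\rho/d(z,z'))/(-\ln\mu_-)$, whence $(\mu+O(\rho))^{T}\le\big(d(z,z')/\rho\big)^{\a-O(\rho)}$ with $\a=\ln\mu/\ln\mu_-$; for $\rho$ small this makes $\Delta_0$ obey \eqref{eqn:holdcs}. Only the $\zeta$-rates $\mu,\mu_-$ enter --- not $\l,\l_-$ --- because $q$ records only the $Z$-coordinate, which is why the $\min$ in \eqref{eqn:defalpha} is here replaced by a single ratio.

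I expect the delicate point to be exactly this coupled recursion: since the $N$-dynamics genuinely depends on the $Z$-coordinate the cross-terms cannot be dropped, and the full force of \eqref{eqn:dom} --- together with the freedom to shrink $\rho$ so the coupling is negligible against the gap $\mu^{-1}-L$ --- is what keeps $\delta_k$ subordinate to $\Delta_k$ over the $\sim|\ln d(z,z')|$ steps; tracking the $O(\rho)$ perturbations of all the rates and of the cutoff scale carefully enough that they assemble into the stated exponent $\a-O(\rho)$ and constant $O(\rho)^{-\a}$ is the bookkeeping-heavy part of the proof. A secondary technical issue is the passage from the fixed-point family $\{\beta^s_z\}$ to an honest partition of $X$, which is what allows $q$ to be defined beyond $Y$.
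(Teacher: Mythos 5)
Your proposal is correct in outline and, for the construction of $q$, coincides with the paper's route: the paper also runs the graph transform globally with base $(Z,\zeta)$ and fiber $F\times M$ (the machinery of Lemma~\ref{lem:1} applied verbatim to leaves $\beta^s_z:F\times M\to Z$), and also establishes disjointness plus covering exactly by the expansivity and index/isotopy arguments of Proposition~\ref{prop:whole phase space}. You diverge in two places. First, for the compatibility $q|_Y=\pi\circ p$ the paper does not use an expansivity argument on semiconjugacies: it restricts the global leaves $W^s_{\pi(b)}$ to local charts, observes that this restriction is a $\G$-invariant local central-stable lamination, and invokes the uniqueness statement of Corollary~\ref{cor:fixed} to conclude $W^s_b\subset W^s_{\pi(b)}$, hence $W_b\subset W^s_{\pi(b)}$ and $q|_Y=\pi\circ p$. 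Your alternative (both maps intertwine $\G|_Y$ with $\zeta$ and stay $O(\rho)$-close to the $Z$-coordinate along every forward orbit, so expansivity of the expanding map $\zeta$ forces equality) is a valid and arguably more self-contained argument; the paper's version has the advantage of reusing an already-proved uniqueness statement with no new estimates. Second, for the \ho bound \eqref{eqn:holdcs} the paper does not run a two-point coupled recursion: it iterates the functional contraction \eqref{eqn:36a} of the graph transform, exactly as in the proof of statement b) of Theorem~\ref{thm:mt} in Section~\ref{sec:hold} ($d(W^s_z,W^s_{z^*})\le(\mu+O(\delta))\,d(W^s_{\zeta(z)},W^s_{\zeta(z^*)})$, iterated $k\approx\log_{\mu_-}(d(z,z')/\delta)$ times, with the final distance bounded by $O(\delta)$), which yields the exponent $\ln\mu/\ln\mu_--O(\rho)$ directly. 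Your orbitwise $(\Delta_k,\delta_k)$ recursion reaches the same exponent by the same comparison of the rates $\mu$ and $\mu_-$, and the subordination $\delta_k\le K\Delta_k$ does close under \eqref{eqn:dom} for small $\rho$ (the dangerous cross term in the $\Delta$-inequality is only $O(\rho)$ because the unperturbed $Z$-component is independent of the fiber), so your route works; it is more hands-on but forces you to redo pointwise estimates that the paper gets for free from Lemma~\ref{lem:1}. One small inaccuracy: the $w$-differential of the fiber action is not bounded by $\max(\l,\|\partial f_b/\partial m\|)$ alone, since the block $\partial f_b/\partial \xi$ also acts inside the fiber $F\times M$; this is harmless because \eqref{eqn:dom} defines $L$ so as to absorb $\partial f_b/\partial b$, and your recursion only uses the rate $L+O(\rho)$.
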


As was mentioned above, a particularly important case in which the
theorem applies is the Smale-Williams solenoid with $Z=S^1$, $F=D$ and $h$ given by \eqref{eqn:sol}.

\subsection{Fubini revisited} \label{sub:fubini}

Let $\Sigma^2_+$ be the set of all sequences $\omega^+ =
\omega_0\omega_1\omega_2 \dots $ of zeroes and ones, infinite to the
right with the $(\frac 1 2, \frac 1 2)$ Bernoulli measure. It is
known that for almost all such sequences $\omega^+$, any finite word
$w$ of any length $n$ is encountered within $\omega^+$ with
frequency exactly equal to $2^{-n}$. We are interested in sequences
for which this property fails. More precisely, given $\kappa>0$ and
$w$ a finite word of length $n$, we say that $\om^+$ is
$\kappa,w-$\emph{atypical} if the sequence \begin{equation}
\label{eqn:freq} a_k(\omega^+,w):=\frac {\textrm{number of
occurences of }w\textrm{ among first }k\textrm{ digits of
}\omega^+}{k} \end{equation} has a limit point \emph{outside}
$[2^{-n}-\kappa, 2^{-n}+\kappa]$. The sequence $\omega$ defines a
point $y=\overline{0,\om^+}\in S^1$ written in base 2. We say that
$y$ is $\kappa,w-$\emph{atypical} if $\om^+$ is $\kappa
,w-$atypical. Let $A_{\kappa,w}\subset S^1$ be the set of atypical
points. By the ergodic theorem, it has Lebesgue measure 0 in $S^1$.

\begin{Thm} \label{thm:fubini} Consider a \skpr $\F $ over the
solenoid map as in Theorem~\ref{thm:st}. Let $\zeta : Z \to Z$ be
the duplication of a circle. For any word $w$ and positive $\ka $
\tes $\rho $ \st the following holds. Let $A_{\ka ,w}$ be the same
as above. Then for any $\rho $ perturbation $\G $ of $\F $ and $q$
as in \eqref{eqn:semisol}, we have: \begin{equation}
\label{eqn:semifub2} \emph{mes }q^{-1}(A_{\kappa,w}) = 0.
\end{equation} In other words, the union of $\kappa,w-$atypical
fibers has Lebesgue measure 0 in $X$. \end{Thm}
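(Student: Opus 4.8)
The plan is to combine the Hölder estimate of Theorem~\ref{thm:st} with a dimension/covering argument, using the fact that $\alpha = \ln\mu/\ln\mu_- = 1$ in the solenoid case, so that the Hölder exponent $\alpha - O(\rho)$ is as close to $1$ as we wish. First I would fix $\kappa$ and $w$ (of length $n$), and recall from the ergodic theorem that $\dim_H A_{\kappa,w} = 0$ — in fact $A_{\kappa,w}$ has Hausdorff dimension zero because it is contained, for each $\e>0$, in the set of points whose digit-frequencies of $w$ deviate by at least $\kappa$ infinitely often, which is a standard large-deviations set of arbitrarily small box dimension (one can cover it, at scale $2^{-k}$ for suitable $k$, by at most $2^{\e k}$ dyadic intervals). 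This is the step where one must be slightly careful: $A_{\kappa,w}$ is not finite, only of zero dimension, so one needs the covering statement to hold at a sequence of scales tending to $0$, which is exactly what the deviation estimate provides.

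Next I would use statement b) of Theorem~\ref{thm:st}: the map $q$ has fibers $W^s_z = q^{-1}(z)$ which are graphs of Lipschitz maps $\beta^s_z : F \times M \to Z$, depending $(\alpha - O(\rho))$-Hölder continuously on $z$ in the $C^0$ norm. Consequently $q^{-1}$, as a set-valued map, is Hölder: if $Z' \subset Z$ is covered by $N$ balls of radius $r$, then $q^{-1}(Z')$ is covered by $O(N)$ sets each of which is a graph over $F\times M$ of $C^0$-diameter $O(r^{\alpha - O(\rho)})$ in the $Z$-direction and of full size in the $F\times M$ directions. Since $\dim Z = 1$ here (as $Z = S^1$) and $F\times M$ has some fixed dimension $D$, each such ``slab'' has Lebesgue measure $O(r^{\alpha-O(\rho)})$ in $X$. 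Therefore $\mathrm{mes}\, q^{-1}(Z') \le C \cdot N \cdot r^{\alpha - O(\rho)}$.

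Now I would put the two ingredients together. Choose $\rho$ small enough that $\alpha - O(\rho) > \tfrac12$, say — any fixed exponent $> 0$ will do, but we want it bounded below away from $0$ uniformly. Pick $\e>0$ with $\e < \alpha - O(\rho)$. By the dimension-zero property of $A_{\kappa,w}$, for every $\delta>0$ there is a scale $r>0$ and a cover of $A_{\kappa,w}$ by $N$ intervals of radius $r$ with $N r^{\e} < \delta$; in fact one gets a sequence of such covers with $r \to 0$ and $N r^{\alpha - O(\rho)} \to 0$. Applying the Hölder bound on $q^{-1}$ to each cover gives $\mathrm{mes}\, q^{-1}(A_{\kappa,w}) \le C\, N\, r^{\alpha - O(\rho)} \to 0$, hence $\mathrm{mes}\, q^{-1}(A_{\kappa,w}) = 0$, which is \eqref{eqn:semifub2}.

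The main obstacle I anticipate is making the covering estimate for $A_{\kappa,w}$ precise enough to feed into the Hölder bound: one needs not merely $\dim_H A_{\kappa,w} = 0$ but a quantitative statement that at scale $2^{-k}$ the set is covered by at most $2^{\e k}$ dyadic arcs, for all large $k$ (or at least infinitely many $k$), so that the product $N \cdot r^{\alpha - O(\rho)}$ genuinely tends to $0$. This follows from a Chernoff/large-deviation bound on the Bernoulli frequencies of the word $w$: the number of length-$k$ binary strings in which $w$ occurs with frequency outside $[2^{-n}-\kappa, 2^{-n}+\kappa]$ is at most $2^{k(1 - c(\kappa,n))}$ for some $c(\kappa,n)>0$, and $A_{\kappa,w}$ is contained in the corresponding union of dyadic arcs for infinitely many $k$. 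Everything else — the passage from the Hölder graph bound of Theorem~\ref{thm:st} to a Hölder bound on the Lebesgue measure of preimages, and the final limit — is routine, provided one keeps track of the fact that the exponent in \eqref{eqn:holdcs} is a fixed number bounded away from $0$ once $\rho$ is fixed.
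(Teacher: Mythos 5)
There is a genuine gap, and it is in the very first step: the claim that $\dim_H A_{\kappa,w}=0$ is false, and it is not what the ergodic theorem gives (that only yields Lebesgue measure zero). The set of $\kappa,w$-atypical points is a large-deviation set whose Hausdorff dimension is in general \emph{positive} and, for small $\kappa$, close to $1$: for instance it contains all points whose binary expansion has word-$w$ frequency converging to a value outside $[2^{-n}-\kappa,2^{-n}+\kappa]$, and such sets have dimension equal to an entropy value strictly between $0$ and $1$. Your own quantitative input at the end is the correct one — the number of atypical words of length $k$ is at most $2^{k(1-c(\kappa,n))}$ — but this bound yields covers by $2^{k(1-c)}$ dyadic arcs of radius $2^{-k}$, hence only $\dim_H A_{\kappa,w}\le 1-c(\kappa,w)$, not covers by $2^{\e k}$ arcs for arbitrary $\e$; the two statements in your write-up contradict each other. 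This matters for the logic of the proof: since the dimension is only $1-c(\kappa,w)$, the product $N\,r^{\alpha-O(\rho)}$ tends to $0$ only if the H\"older exponent satisfies $\alpha-O(\rho)>1-c(\kappa,w)$. So it is \emph{not} true that ``any fixed exponent $>1/2$ will do''; the smallness of $\rho$ must be chosen \emph{after} $\kappa$ and $w$ (through $c(\kappa,w)$), which is exactly the quantifier order in the statement of Theorem~\ref{thm:fubini} and is the reason the theorem is stated that way. With that correction the argument closes.

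Apart from this, your route is essentially the paper's: the paper proves $\dim_H A_{\kappa,w}\le 1-\e(\kappa,w)$ by the same dyadic-cover/large-deviation count (Proposition~\ref{prop:fubini2} and Theorem~\ref{thm:largedev}), and then deduces measure zero of $q^{-1}(A)$ from the H\"older estimate \eqref{eqn:holdcs}. The only stylistic difference is in the second half: the paper fixes $x\in F\times M$, observes that $q^{-1}(A)\cap\bigl(Z\times\{x\}\bigr)=\{\beta^s_z(x)\,:\,z\in A\}$ is the image of $A$ under a $(\alpha-O(\rho))$-H\"older map of $Z$, applies Falconer's lemma to bound its dimension below $\dim Z$, and finishes with the classical Fubini theorem; you instead cover $q^{-1}$ of each small ball by a slab of thickness $O_\rho(1)\,r^{\alpha-O(\rho)}$ around one graph and sum measures directly. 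Both are valid and quantitatively equivalent; the slab version avoids quoting Falconer but needs the same inequality $\alpha-O(\rho)>\dim_H A_{\kappa,w}$ to make the sum small, which is precisely where your dimension-zero shortcut breaks down.
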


An analog of this theorem for perturbations of skew products over
the Anosov \diffeo of a two-torus may be proved basing on a recent
result of P. Saltykov \cite{S09}.

\subsection{Attractors with intermingled basins}\label{sub:kan}

The tools developed in this paper allow us to get a new proof of the
following phenomenon discovered by I. Kan:

\begin{Thm}  \label{thm:kan} [\cite{IKS08}, \cite{KS*}] The set of
maps of an annulus $S^1\times [0,1]$ that have intermingled
attracting basins is open in the set of all maps of the annulus into
itself that keep the boundary invariant. \end{Thm}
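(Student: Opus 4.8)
The plan is to deduce Theorem~\ref{thm:kan} from Theorem~\ref{thm:st} applied to a suitable skew product over the doubling map of the circle. Recall Kan's construction: one starts with a specific skew product $\F$ over $\zeta: S^1 \to S^1$, $\zeta(y)=2y$, acting on the annulus $X = S^1 \times [0,1]$, keeping the boundary circles $S^1\times\{0\}$ and $S^1\times\{1\}$ invariant, and arranges the fiber dynamics so that the two ergodic SRB-type measures supported on the boundary circles have basins of attraction that are dense in $X$ — hence ``intermingled''. The openness statement is about the set of \emph{all} annulus maps near $\F$ that preserve the boundary, not just skew products; so the task is exactly to show this intermingled-basin property survives an arbitrary small $C^1$-perturbation $\G$ of $\F$.

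First I would verify that the Kan skew product $\F$ fits the framework of Theorem~\ref{thm:st}: here $Z = S^1$ with $\zeta$ the doubling map (expanding, so $\mu_-=\mu=\tfrac12$), $F$ is a point (or one can absorb the trivial $F$-factor), and $M = [0,1]$ is the fiber; the modified dominated splitting condition \eqref{eqn:dom} holds because the fiber maps of the Kan example contract in $m$ at a rate bounded away from the expansion rate $2$ of the base, provided $\F$ is taken with sufficiently mild fiber dynamics (this is part of Kan's construction and can be arranged). Then Theorem~\ref{thm:st} gives, for every small $\rho$-perturbation $\G$, a semiconjugacy $q: X \to S^1$ intertwining $\G$ with $\zeta$, whose fibers $W^s_z = q^{-1}(z)$ are Lipschitz graphs $O(\rho)$-close to vertical, and such that $G := \G$ conjugated along these fibers is a genuine skew product $\widehat{\G}$ over $\zeta$ with fiber maps $C^1$-$O(\rho)$-close to those of $\F$ (by \eqref{eqn:close}). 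The boundary circles $S^1\times\{0,1\}$ are $\G$-invariant by hypothesis, and one checks they coincide with the corresponding boundary leaves of the conjugacy, so the conjugate skew product $\widehat{\G}$ also preserves the boundary.

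Next I would transport Kan's intermingled-basin argument to $\widehat{\G}$. Kan's proof that the two boundary measures have intermingled basins is robust: it rests on (i) the boundary circles carrying attracting invariant measures with negative normal Lyapunov exponents, and (ii) a density/recurrence argument (using the expansion in the base and a suitable Lyapunov-function / drift estimate on the fiber) showing that for a.e. point the fiber coordinate is attracted to one of the two boundaries, with both outcomes occurring on dense sets. All the relevant inequalities are open conditions on the $C^1$-jet of the fiber maps, so they continue to hold for $\widehat{\G}$ when $\rho$ is small; hence $\widehat{\G}$ has intermingled basins $\widehat{B}_0, \widehat{B}_1 \subset X$, each dense. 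Finally, I would push these basins back through the conjugacy $H^{-1}$ (the fiberwise homeomorphism of Theorem~\ref{thm:st}): since $H$ is a homeomorphism mapping $\G$-orbits to $\widehat{\G}$-orbits and sends each boundary circle to itself, the $\G$-basins are $H^{-1}(\widehat{B}_i)$, which are again dense (homeomorphisms preserve density) and of full measure in their union. Because $\rho$ was arbitrary below a threshold depending only on $\F$, this holds on a whole $C^1$-neighborhood of $\F$ in the space of boundary-preserving annulus maps, which is precisely the openness claim.

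The main obstacle is the interaction between measure and the conjugacy $H$. The conjugacy is only \ho, not absolutely continuous, so ``full Lebesgue measure of $\widehat{B}_i$ in $X$'' does not transfer directly to ``full measure of $H^{-1}(\widehat{B}_i)$''. This is exactly the ``Fubini nightmare'' the paper addresses, and the resolution is to run Kan's a.e.-convergence argument \emph{directly for $\G$} — i.e., show the drift/Lyapunov estimate in the original coordinates using that the fibers $W^s_z$ are Lipschitz-close to vertical and \ho in $z$ with exponent near $1$ (Theorem~\ref{thm:st}b and the remark following Theorem~\ref{thm:mt}) — rather than importing a full-measure statement across $H$. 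Concretely, one uses the \ho control to show the holonomy distortion along center-stable leaves is negligible at the scale of the normal contraction, so that the set of fibers on which the normal behavior is ``wrong'' has measure zero by the same ergodic-theoretic argument that powers Theorem~\ref{thm:fubini}. Once that measure-theoretic step is done in the $\G$-coordinates, the topological (density) half of ``intermingled'' is the easy part and transfers through $H^{-1}$ without difficulty.
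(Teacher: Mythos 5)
First, a point of comparison: the paper does not prove Theorem~\ref{thm:kan} at all --- it is quoted from \cite{IKS08}, \cite{KS*}, with only the remark that the tools of this paper (the H\"older semiconjugacy of Theorem~\ref{thm:st} and the Fubini-type argument of Section~\ref{sec:fubini}) make a new proof possible. Your overall strategy --- semiconjugate the perturbation to the doubling map, transfer the topological (density) half through the conjugacy, and do the measure-theoretic half directly in the original coordinates using the H\"older exponent close to $1$ together with a Hausdorff-dimension/large-deviation estimate --- is exactly the route those references and this machinery suggest, so in spirit you are on the intended path. However, there are two genuine gaps. (1) You cannot literally invoke Theorem~\ref{thm:st} (or Theorem~\ref{thm:mt}) for the annulus. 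Kan's map and all its admissible perturbations are endomorphisms of $S^1\times[0,1]$: the base map is the non-invertible doubling map and there is no stable factor $F$. The paper's standing hypotheses require $\F$ and $\G$ to be $C^1$-morphisms (maps with $C^1$ inverses), the perturbation condition \eqref{eqn:perturbation} involves $\G^{-1}$, and the graph transform producing the center-stable lamination is built from $\G^{-1}$; taking $F$ to be a point does not remove any of this. One must either rebuild the graph transform using inverse branches of the expanding base (which is what \cite{IKS08} does), or pass to the invertible model on the solid torus times a circle as in \cite{I08}, which changes the phase space and no longer yields the annulus statement. This is repairable, but it is a reconstruction, not a citation of Theorem~\ref{thm:st}.

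(2) The measure-theoretic half is the actual content of the theorem, and your proposal defers it. As you yourself note, ``dense and of full measure in their union'' does not pass through the non-absolutely-continuous conjugacy $H^{-1}$; but the repair you sketch still omits the quantitative chain that makes it work: apply Theorem~\ref{thm:weak} (equivalently Theorem~\ref{thm:largedev}) to the fiber log-derivative of the perturbed fiber maps along each boundary circle, to conclude that the set of base points whose Birkhoff averages deviate at \emph{some} time $n\ge N_0$ has Hausdorff dimension $<1$; deduce for the complementary (``good'') fibers a \emph{uniform} strip near each boundary circle all of whose points on such fibers converge to that circle; then slice the strip by horizontal circles $S^1\times\{m\}$ and use that $y\mapsto\beta_y(m)$ is H\"older with exponent $\alpha-O(\rho)$ close to $1$ (the argument of Proposition~\ref{prop:fubini1} via Lemma~\ref{lem:haus}) to conclude that in each slice the non-converging points form the image of a set of dimension $<1$, hence have measure zero. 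This gives full measure of the basin inside a strip, hence positive measure of each basin, and measure zero of the complement of the union. Without this chain, your phrase ``the set of fibers on which the normal behavior is wrong has measure zero by the same ergodic-theoretic argument'' appeals to the conclusion rather than proving it; in particular, positivity of the measure of \emph{each} basin cannot be obtained by pushing a positive-measure set forward through a merely H\"older map (such an image can have measure zero), which is precisely why the slice-wise, complement-is-small argument is needed.
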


\emph{Intermingled attracting basins} means the following thing: the
Milnor attractor of the maps mentioned in Theorem \ref{thm:kan}
consists of the two boundary circles, each one having an attracting
basin which is dense and of positive Lebesgue measure. \\

In \cite{IKS08}, \cite{KS*} the theorem above is improved by: \\

\emph{The complement to the union of the attracting basins in the
perturbed  Kan example has Hausdorff dimension smaller than 2.}\\

Theorem~\ref{thm:kan}, in a slightly different form, was claimed in
\cite{K94}; as far as we know, the first proof was obtained in
\cite{Bo}. The same tools also allow us to construct diffeomorphisms
with intermingled attracting basins \cite{I08}; the phase space in
this case is the product of a solid torus and a circle.

\section{Rate of contraction of the graph transform map}
\label{sec:grtrans}

In this section we prove statement $a)$ of Theorem \ref{thm:mt}, and
establish the rate of contraction of the graph transform map, see
Lemma~\ref{lem:1} below. There are two ways to prove statement $a)$.
The first one is to establish partial hyperbolicity
of the \skpr $\F $, and refer to the \hps theory. This theory
implies the semiconjugacy statement $a)$, but gives no estimate of
the rate of contraction of the graph transform map. The second way
is to revisit the graph transform map and to prove simultaneously
the fixed point theorem and the rate of contraction estimate for
this map. This is done in the present section.

\subsection{Laminations}
\label{sub:lam}

Let $B,h,\Lambda$ be as in Subsection~\ref{sub:perhol}. In the
fibers of the bundles $E^s$ and $E^u$ we have the abstract
Riemannian metric, while in the fibers of the trivial bundles $B
\times \rr^k$ and $B \times \rr^l$ we have the standard Euclidean
metric. The isomorphism $\varphi^s$ of \eqref{eqn:triv} implies that
there exist $k$ linearly independent sections of $E^s$. By applying
Gram-Schimdt orthonormalization to these sections, it follows that
there exist $k$ orthonormal sections of $E^s$. Sending a fixed
orthonormal basis of $\rr^k$ to these orthonormal sections will give
us a \emph{metric-preserving} isomorphism $B \times \rr^k
\rightarrow E^s$, and it is this isomorphism that we will henceforth
denote by $\varphi^s$. The same discussion applies to $\varphi^u$.
\\

For any $\delta>0$, we define $Q^s(\delta)$ and $Q^u(\delta)$ to be the open balls of
radius $\delta$ around the origin of $\rr^k$ and $\rr^l$,
respectively. The metric-preserving isomorphisms $\varphi^s$ and
$\varphi^u$ induce metric-preserving isomorphisms in each fiber:
\begin{equation} \label{eqn:trivfiber} \varphi^s_b(\delta):Q^s(\delta) \rightarrow
Q^s_b(\delta), \textrm{ }\textrm{ }\textrm{ } \varphi^u_b(\delta):Q^u(\delta) \rightarrow
Q^u_b(\delta), \end{equation} where $Q^s_b(\delta) \subset E^s$ and $Q^u_b(\delta) \subset
E^u$ are the open balls of radius $\delta$ around the origin in the respective fibers. \\

The number $\delta$ must be chosen small enough such that for any $b\in B$, the
exponential map gives us an open embedding $Q^s_b(\delta) \times Q^u_b(\delta)
\hookrightarrow B$. We write $B_b(\delta)$ for the image of this map.
Composing this embedding with the isomorphism $\varphi^s_b(\delta) \times
\varphi^u_b(\delta)$ gives us an open embedding (coordinate chart):

\begin{equation}
\label{eqn:fixiso} \varphi_b(\delta):Q^s(\delta) \times Q^u(\delta) \hookrightarrow B.
\end{equation}
Let us take $C>\textrm{max}(\lambda_-^{-1},\mu_-^{-1})$, and consider the above constructions for radius $C\delta$.
Then we can express the map $h:B\rightarrow B$ locally around $b$ in the domain and around $h(b)$ in the target.
Therefore, in coordinates given by the chart \eqref{eqn:fixiso}, the map $h$ has the form:

\begin{equation} \label{eqn:hcoord} h_b(\delta) =
(\ph_{h(b)}(C\delta))^{-1} \circ h \circ \ph_b(\delta), \textrm{
}\textrm{ }\textrm{ } {(h^{-1})}_b(\delta) = (\ph_{b}(C\delta))^{-1}
\circ h^{-1} \circ \ph_{h(b)}(\delta). \end{equation}

For various values of $\delta$, the maps $h_b(\delta)$ will
represent the same germ at $0$, but will have different domains.
Similarly, the maps ${(h_b)}^{-1}(\delta)$ and
${(h^{-1})}_b(\delta)$ are representatives of the same germ at $0$,
but have different domains. \\

Until Section~\ref{sec:hold}, we will work with a single, fixed
$\delta$. Therefore, we will often write simply
$Q^s,Q^u,Q^s_b,Q^u_b,B_b,\ph^s_b,\ph^u_b,\ph_b,h_b,(h^{-1})_b$ for
the notions introduced in the previous paragraphs. By
\eqref{eqn:hypstructure} and the fact that \diffeos
\eqref{eqn:trivfiber} are metric-preserving, $dh_b$ has
block-diagonal form at $0$: $$ dh_b(0) = \textrm{diag} (A^u, A^s),
$$ where $\lambda_-\leq |A_s|\leq \lambda$ and $\mu_-\leq
|A_u^{-1}|\leq \mu$. Because the coordinate charts $\ph_b$ are
smooth functions, we have the following estimate throughout
$Q^s\times Q^u$: \begin{equation} \label{eqn:esth} {||dh_b -
\textrm{diag} (A^u, A^s)||}_{C^0} \le O(\delta ). \end{equation}

Now consider another compact manifold $M$, as in the statement of Theorem~\ref{thm:mt}.
For any domain $A$ and any mapping $\b : A \to B$, we will denote by $\ga (\b )$ the map from $A$ onto the graph:

\begin{equation}  \label{eqn:gamma} \ga(\b):A \rightarrow A \times
B, \textrm{ }\textrm{ }\textrm{ } \ga (\b ): a \mapsto (a, \b (a))
\in A \times B.\end{equation}  Statement $a)$ of Theorem
\ref{thm:mt} provides a correspondence between leaves and base
points, so it's about time we defined these. The leaves of
center-stable, center-unstable and center foliations corresponding
to $b \in \L$ are represented by Lipschitz maps: \begin{equation}
\label{eqn:leaf} \beta^s_b:Q^s\times M \rightarrow Q^u,\textrm{
}\textrm{ } \textrm{ }\beta^u_b:Q^u\times M \rightarrow Q^s,\textrm{
}\textrm{ } \textrm{ }\beta_b:M \rightarrow Q^u \times Q^s.
\end{equation} Then we define the leaves to be simply the graphs of
the Lipschitz maps, embedded in $B\times M$ via \eqref{eqn:fixiso}:

\begin{equation} \label{eqn:leaf1} W_b^* = \mbox{Im}(\ph_b \times
\textrm{Id}) \circ \ga (\b_b^*) \end{equation}
Here and below, $*$ stands for $s, u$ or blank space. \\

Intuitively, $W_b^s$ denotes a center-stable leaf, $W_b^u$ denotes a central-unstable leaf,
while $W_b$ denotes a central leaf. We will never consider
strongly stable or unstable leaves. \\

We now define certain functional spaces $\B^*$ of maps $\b^*$. These are, by definition, the
spaces of Lipschitz  maps \eqref{eqn:leaf} that satisfy the condition:

\begin{equation} \label{eqn:max} \max \left\{   \| \b^* \|_{C^0}, \frac {\mLip \b^*}D \right\}
\le \frac {\delta}2 \end{equation}
Here, $D$ is a constant that will be picked in the proof of Lemma~\ref{lem:1}.
The norm on the spaces $\B^*$ will always be the $C^0$ norm, and will be denoted by $||\cdot||$. \\

Intuitively speaking, a central-stable, central-unstable or central
\emph{lamination} is a continuous assignment of leaves as $b$ runs
over $\Lambda$. Rigorously speaking, a lamination is a continuous map:

\begin{equation} \label{eqn:section} S^*:\Lambda \rightarrow \BB^*.
\end{equation}
The map $S^*$ is completely determined by the continuous collection of maps $\b^*_b=S^*(b)$, as $b$ ranges over $\Lambda$.
Equivalently, $S^*$ is completely determined by the leaves $W^*_b$ of these maps. \\

The space of continuous sections $S^*$ as above is denoted
by $\Ga^*$. The norm in this space is again the $C^0$ norm:

$$ \| S^*\| = \max_{b \in \L} \|S^*(b)\| .$$
For any $\delta > 0$ small enough, the metric space $\Ga^*$ with the distance $\rho (S^*_1, S^*_2) = ||S^*_1 - S^*_2||$ is complete.
Indeed, if $\b^*_n \to \b^*$ and $\mLip \b^*_n \le D\delta/2 $, then
$\mLip \b^* \le D\delta/2$. \\

Now consider a map $\G:B\times M \rightarrow B \times M$, like in the setup of Theorem~\ref{thm:mt}.
A central-stable, central-unstable or central lamination is called
$\G-$invariant if its leaves $W^*_b$ satisfy: \begin{equation}
\label{eqn:invfol1} \G(W^{s}_{b}) \subset W^s_{h(b)}, \textrm{
}\textrm{ }\textrm{ } W^u_{h(b)} \subset \G(W^u_b) \end{equation}
\begin{equation} \label{eqn:invfol2}\textrm{or }\textrm{ }\textrm{ } W_{h(b)} = \G(W_b).
\end{equation} These conditions can all be written in terms of the
maps $\beta^*_b$ defining these leaves, and thus in terms of
laminations $S^*$ themselves. This will be done in the beginning
of Subsection~\ref{sub:graph}. \\

Our plan for the proof of Statement $a)$ of Theorem~\ref{thm:mt} will
be the following: we will use the graph transform method described
in the following subsection to find $\G-$invariant central-stable
and central-unstable laminations. Then the central lamination will be
given by $$ W_b=W^s_b\cap W^u_b. $$ Property \eqref{eqn:invfol2}
will follow from \eqref{eqn:invfol1}, so the central lamination will
also be invariant under $\G$. Once we have the central lamination,
we will define $$ Y=\bigsqcup_{b\in \Lambda} W_b. $$ Sending $W_b$
to $b$ defines the desired projection map $p:Y\rightarrow \Lambda$
of \eqref{eqn:semi}. Then the $\G-$invariance of the central
lamination is precisely equivalent to the commutativity of diagram
\eqref{eqn:semi}. We will follow this plan in the next subsections.

\subsection{The graph transform map}
\label{sub:graph}

Here we will deal with the $*=s$ case only, since the $*=u$ case is treated similarly.
After that, the central case will be treated as described above.
We will introduce first a ``pointwise'' graph transform map:

$$
\g_b:\B^s \longrightarrow \B^s
$$
that acts on single leaves, and then a ``global'' graph transform map:

$$ \g:\Ga^s \longrightarrow \Ga^s $$ that acts on entire
laminations. In both cases, the geometric idea is the same: start
with a map $\beta^s:Q^s \times M \longrightarrow Q^u$ as in
\eqref{eqn:leaf}. Take the corresponding leaf $W_{h(b)}^s \subset B
\times M$, and take its inverse image under $\G$. The claim is that
we obtain a different leaf $\overline{W}_{b}^s \subset B \times M$,
corresponding to a map $\overline{\beta}^s: Q^s \times M
\longrightarrow Q^u$. Then we define the graph transform map as: $$
\g_b(\beta^s)=\overline{\beta}^s. $$ In other words, the graph
transform is implicitly defined by the following relation:

\begin{equation} \label{eqn:implicitdef}
\{\G^{-1}(\ph_{h(b)}(x_s,\beta^s(x_s,m)),m)\}=\{(\ph_b(x_s,\overline{\beta}^s(x_s,m)),m)\}.
\end{equation} We will prove in the appendix that the above
correctly defines $\overline{\beta}^s$ (in other words, that the
Implicit Function Theorem applies). The above definition also works
in families. For a lamination $S^s \in \Gamma^s$ with leaves that
are graphs of $\beta^s=S^s(h(b))$, define its graph transform as: $$
\g(S^s)=(\overline{S}^s), $$ where
$\overline{S}^s(b)=\overline{\beta}^s$ is defined by relation
\eqref{eqn:implicitdef}. \\

Comparing with \eqref{eqn:invfol1}, we see that a lamination $S^s$ is $\G-$invariant
if and only if it is a fixed point of the graph transform map $\g$.
Therefore, to show that there exists a unique $\G-$invariant central-stable lamination,
we will use the fixed point principle: it is enough to show that $\g$ is well defined and contracting.

\begin{Lem}
\label{lem:1}
For $\rho$ small enough and any $\F, \G$
as in Theorem~\ref{thm:mt}, there exists $\delta=O(\rho)$ so
that the graph transform $\g$ maps $\Ga^s$ into itself and is contracting
with Lipschitz constant $\mu+O(\delta)$. In other words, for any $S^s_0,
S^s_1 \in \Ga^s $ we have: \begin{equation} \label{eqn:cont1} \|
\g(S_0^s) - \g(S_1^s) \| \le (\mu + O(\delta))\| S_0^s - S_1^s\| .
\end{equation}
In the pointwise situation, for any $b\in \Lambda$, we claim that $\g_b$ maps $\B^s$ into itself.
Furthermore, for any $\b^s_0,\beta^s_1 \in \B^s$, we have:
\begin{equation}  \label{eqn:36a}
\| \g_b(\b_0^s) - \g_b(\b^s_1)\| \leq (\mu + O(\delta ))\|  {\b^s_0} - {\b^s_1} \|.
\end{equation}
\end{Lem}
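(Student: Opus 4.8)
The plan is to establish \eqref{eqn:36a} first — the pointwise contraction estimate — and then to bootstrap it to the global statement \eqref{eqn:cont1}, since the graph transform $\g$ acts on a lamination $S^s$ by applying the maps $\g_b$ fiber by fiber and taking the sup over $b$. So the heart of the matter is the single-leaf estimate. For this I would work entirely in the coordinate charts $\ph_b$, so that $h$ is represented by $h_b$ with $dh_b(0) = \mathrm{diag}(A^u,A^s)$, $\la_- \le |A^s| \le \la$, $\mu_- \le |(A^u)^{-1}| \le \mu$, and, crucially, by \eqref{eqn:esth} the full derivative of $h_b$ (and of $\G$, which is $\rho$-close to $\F$ in $C^1$) differs from the block-diagonal model by $O(\delta) + O(\rho)$ uniformly. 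By the convention in the paper I then choose $\delta = O(\rho)$, so all error terms are $O(\delta)$. The first step is therefore to write the implicit relation \eqref{eqn:implicitdef} out in coordinates, i.e. to express $\overline{\b}^s = \g_b(\b^s)$ in terms of $\b^s$ via the components of $\G^{-1}$ in the charts $\ph_{h(b)}$ (domain) and $\ph_b$ (target), and to verify via the Implicit Function Theorem that this is well-defined on the domain $Q^s \times M$ of radius $\delta/2$ — this is the routine part deferred to the appendix.

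The second and central step is the contraction estimate itself. Fix $\b^s_0,\b^s_1 \in \B^s$ with transforms $\overline{\b}^s_0,\overline{\b}^s_1$. For a point $x_s \in Q^s$ and $m \in M$, the defining relation \eqref{eqn:implicitdef} says that the point $(\ph_b(x_s,\overline{\b}^s_i(x_s,m)),m)$ maps under $\G$ to a point lying on the leaf $W^s_{h(b)}$ of $\b^s_i$; equivalently, applying $\G$ moves the central-unstable coordinate in a way controlled by $A^u$ (which expands, hence its inverse — the relevant direction here since we pull back — contracts by $\mu$), while the image's dependence on the $Q^u$-coordinate of the source is weak. Concretely: writing $\Delta = \overline{\b}^s_0(x_s,m) - \overline{\b}^s_1(x_s,m)$, pushing both points forward by $\G$ and comparing, the difference in the $Q^u$-direction at time $+1$ is $(A^u + O(\delta))\Delta$ up to the contribution of $\|\b^s_0 - \b^s_1\|$ evaluated at the (nearby) image base points; but at time $+1$ that difference must equal (up to the graph-transform bookkeeping) at most $\|\b^s_0 - \b^s_1\|$, because the images land on the two leaves $W^s_{h(b)}$ of $\b^s_0,\b^s_1$ which are $C^0$-distance $\|\b^s_0-\b^s_1\|$ apart in the $Q^u$-direction, plus the fact that the $Q^s$- and $m$-coordinates of the two images differ by at most $O(\delta)$ times the quantities involved (using the Lipschitz bounds $\mLip \b^s_i \le D\delta/2$ and the modified dominated splitting \eqref{eqn:dom} to keep those cross-terms subordinate). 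Solving, $|A^u\Delta| \le \|\b^s_0 - \b^s_1\| + O(\delta)(|\Delta| + \|\b^s_0-\b^s_1\|)$, hence $|\Delta| \le (\mu + O(\delta))\|\b^s_0 - \b^s_1\|$ after inverting $A^u$; taking sup over $(x_s,m)$ gives \eqref{eqn:36a}. I would organize the algebra by first linearizing $\G^{-1}$ at $0$ in the charts, carrying the $O(\delta)$ remainder explicitly, and only then taking norms — this keeps the estimate honest.

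The third step, invariance of the domain, i.e. that $\g_b(\B^s) \subset \B^s$: this requires checking the two conditions in \eqref{eqn:max} for $\overline{\b}^s$. The $C^0$-bound $\|\overline{\b}^s\| \le \delta/2$ follows by applying the contraction estimate of step two with $\b^s_1 \equiv 0$ (or rather with a reference leaf) together with the observation that $\G^{-1}$ nearly preserves the zero section up to $O(\rho) = O(\delta)$ — so $\|\overline{\b}^s\| \le \mu\cdot(\delta/2) + O(\delta)$, which is $\le \delta/2$ once $\delta$ (equivalently $\rho$) is small enough and the implied constants are absorbed; here one uses $\mu < 1$ with room to spare. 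The Lipschitz bound $\mLip \overline{\b}^s \le D\delta/2$ is where the constant $D$ gets pinned down: differentiating the implicit relation and using \eqref{eqn:esth}, $\eqref{eqn:dom}$ and the bound $\mLip \b^s \le D\delta/2$, one gets $\mLip \overline{\b}^s \le \mu\, D\delta/2 \cdot(1 + O(\delta)) + O(\delta)$; choosing $D$ large enough that the additive $O(\delta)$ term is dominated — precisely, $D$ with $\mu D + (\text{const}) \le D$, i.e. $D \ge (\text{const})/(1-\mu)$ — closes the estimate. Finally, step four upgrades to the global map: since $\g(S^s)(b) = \g_b(S^s(h(b)))$ and the $C^0$-norm on $\Ga^s$ is $\sup_b$, the pointwise estimates give $\g(\Ga^s) \subset \Ga^s$ and $\|\g(S^s_0) - \g(S^s_1)\| = \sup_b \|\g_b(S^s_0(h(b))) - \g_b(S^s_1(h(b)))\| \le (\mu + O(\delta))\sup_b\|S^s_0(h(b)) - S^s_1(h(b))\| \le (\mu + O(\delta))\|S^s_0 - S^s_1\|$, using that $h(\Lambda) = \Lambda$; completeness of $\Ga^s$ was already noted in the text.

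The main obstacle I anticipate is step two, specifically getting the cross-terms right: when I push the two points $(\ph_b(x_s,\overline{\b}^s_i(x_s,m)),m)$ forward by $\G$, they do not land at the \emph{same} $(x_s',m')$ on the target leaves, so "the two images are $\|\b^s_0-\b^s_1\|$ apart in the $Q^u$-direction" is only true up to a correction involving the horizontal displacement between the two images times $\mLip \b^s_i$. Controlling that displacement forces a small system of coupled inequalities in $|\Delta|$, in the $Q^s$-displacement, and in the $m$-displacement; the modified dominated splitting condition \eqref{eqn:dom} — which says $L < \min(\la^{-1},\mu^{-1})$, i.e. the fiber and cross derivatives are strictly dominated by the base expansion/contraction — is exactly what makes this linear system invertible with the off-diagonal feedback strictly less than $1$, so that all three displacements are ultimately bounded by $(\mu+O(\delta))\|\b^s_0-\b^s_1\|$. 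Writing this coupling cleanly, rather than as a tangle of chained triangle inequalities, will be the delicate bookkeeping; everything else is standard Hadamard–Perron / Hirsch–Pugh–Shub graph-transform machinery.
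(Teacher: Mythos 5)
Your route is the paper's own: prove the pointwise estimate first, define $\overline{\beta}^s=\g_b(\beta^s)$ via the implicit relation \eqref{eqn:implicitdef} in the charts (IFT details in the appendix), get the contraction by absorbing the reparametrization feedback, check the two conditions in \eqref{eqn:max} to fix $D$, and pass to \eqref{eqn:cont1} by taking the supremum over $b$. Your step two matches the paper's decomposition exactly: the paper writes $\overline{\beta}^s=\pi_u\circ\G_b^{-1}\circ\ga(\beta^s)\circ G_{\overline{\beta},b}$ with $G_{\overline{\beta},b}$ as in \eqref{eqn:Gbb}, splits the difference into $T_1+T_2$, bounds $T_1\le(\mu+O(\delta))\|\beta^s_0-\beta^s_1\|$, and shows $T_2\le O(\delta)\|\overline{\beta}^s_0-\overline{\beta}^s_1\|$ (Proposition~\ref{prop:last} plus \eqref{eqn:ppp}), which is your ``cross-term''. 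One small correction of emphasis: the smallness of that feedback coefficient comes from $\mLip\beta^s_1\le D\delta/2$, i.e.\ from membership in $\B^s$, not directly from \eqref{eqn:dom}; the dominated splitting enters only through the fact that this Lipschitz class is preserved.

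The one substantive flaw is in your step three. The displayed bound $\mLip\overline{\beta}^s\le\mu\,D\delta/2\,(1+O(\delta))+O(\delta)$, with the ensuing choice $D\ge\mathrm{const}/(1-\mu)$, drops the reparametrization factor: since $\overline{\beta}^s=(\pi_u\circ\G_b^{-1}\circ\ga(\beta^s))\circ G_{\overline{\beta},b}$ and $\mLip G_{\overline{\beta},b}\le(L+O(\delta))(1+\mLip\overline{\beta}^s)$ (the fiber maps can stretch the $m$-variable by up to $L$; Proposition~\ref{prop:composition}), the correct estimate is $\mLip\overline{\beta}^s\le(\mu\,\mLip\beta^s+O(\delta))(L+O(\delta))(1+\mLip\overline{\beta}^s)$, and closing it requires $\mu L<1$, i.e.\ precisely the modified dominated splitting \eqref{eqn:dom}, with $D$ of size roughly $L/(1-\mu L)$. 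As written, your inequality would make \eqref{eqn:dom} superfluous at this point and would ``prove'' the lemma assuming only $\mu<1$, which is false: without domination the transformed leaf need not remain in the $D\delta/2$-Lipschitz class at all, and then neither the $C^0$ bound via the argument-shift nor the $O(\delta)$ feedback in step two survives. Since you do cite \eqref{eqn:dom} in that step, this is a repairable bookkeeping slip rather than a wrong approach, but the factor $L$ is exactly where the hypothesis earns its keep, so it must appear in the estimate.
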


\begin{Cor} \label{cor:fixed}
For $\delta=O(\rho)$ small enough, the graph transform map $\g$ has a unique fixed point in $\Ga^s$. \end{Cor}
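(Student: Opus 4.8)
The plan is to deduce the corollary directly from Lemma~\ref{lem:1} via the contraction mapping principle. First I would recall that the metric space $\Ga^s$, equipped with the $C^0$ distance $\|S_0^s-S_1^s\|$, is complete: this was observed right after the definition of $\Ga^s$ in Subsection~\ref{sub:lam}, the point being that a $C^0$-limit of Lipschitz maps with Lipschitz constant at most $D\delta/2$ is again Lipschitz with the same constant, so the defining condition \eqref{eqn:max} is preserved under limits and $\Ga^s$ is closed in the space of continuous sections.

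Next, given $\F,\G$ as in Theorem~\ref{thm:mt}, fix $\rho$ small and take $\delta=O(\rho)$ as furnished by Lemma~\ref{lem:1}, so that $\g$ maps $\Ga^s$ into itself and is Lipschitz on $\Ga^s$ with constant $\mu+O(\delta)$; by the notational convention of Subsection~\ref{sub:perhol}, the constant implicit in this $O(\delta)$ depends only on the neighborhood $\Omega$, and not on $\rho$ or on $b\in\Lambda$. Since $\mu<1$ by the hyperbolicity hypothesis on the base map, we may shrink $\rho$ --- hence $\delta=O(\rho)$ --- so that $\mu+O(\delta)<1$. Thus $\g:\Ga^s\to\Ga^s$ is a genuine contraction of a complete metric space, and the Banach fixed point theorem produces a unique fixed point $S^s\in\Ga^s$. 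By the discussion preceding Lemma~\ref{lem:1}, a section of $\Ga^s$ is $\G$-invariant if and only if it is a fixed point of $\g$, so this $S^s$ is exactly the unique $\G$-invariant central-stable lamination.

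I do not expect a substantive obstacle here: all the analytic content --- that $\g$ is well defined, maps $\Ga^s$ into itself, and contracts with rate $\mu+O(\delta)$ --- is already packaged in Lemma~\ref{lem:1}, and completeness of $\Ga^s$ has been recorded. The only delicate point is the bookkeeping of constants: one must make sure that the implied constants are uniform over $\G\in\Omega$ and over $b\in\Lambda$, so that a single threshold on $\rho$ works simultaneously for all perturbations in the fixed neighborhood $\Omega$; this uniformity is precisely what the $O(\cdot)$ convention guarantees.
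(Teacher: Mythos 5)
Your proposal is correct and follows essentially the same route the paper intends: the corollary is an immediate consequence of Lemma~\ref{lem:1} (self-map plus contraction rate $\mu+O(\delta)<1$) combined with the completeness of $\Ga^s$ recorded in Subsection~\ref{sub:lam}, via the Banach fixed point theorem. Your remarks on uniformity of the implied constants over $\G\in\Omega$ and $b\in\Lambda$ match the paper's $O(\cdot)$ convention, so there is nothing to add.
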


\begin{proof} The statements about the global graph transform immediately follow from the
corresponding statements in the pointwise case. So let us start by proving that $\g_b$ maps $\B^s$ to itself.
Take $b\in \La$, $\b^s \in \B^s$ and let $\overline{\beta}^s = \g_b(\b^s)$. We need to prove that:
\begin{equation} \label{eqn:cnorm} \| \overline{\b}^s \| \le \frac {\delta}2,
 \end{equation}
\begin{equation} \label{eqn:lip} \mLip \overline{\b}^s \le \frac
{D\delta}2. \end{equation} Recall that $\gamma_\b $ is the map of
$Q^s \times M$ onto the graph of $\b^s$, see \eqref{eqn:gamma}. In
the Appendix we prove that for any $\b = \b^s$ that satisfies
\eqref{eqn:max}, there exists a Lipshitz homeomorphism $G_{\bar \b ,
b}: Q^s \times M \to Q^s \times M$, see \eqref{eqn:Gbb},  such that
$$ \overline{\b}^s = \pi_u \circ \G_b^{-1} \circ \gamma (\b^s) \circ
G_{\overline\b, b}.$$ Here $\G_b ={(\ph_{h(b)}\times
\textrm{Id})}^{-1} \circ \G \circ (\ph_b \times \textrm{Id})$.  Note
that $$ {||\overline{\b}^s||} \le ||\pi_u \circ \G_b^{-1} \circ
\gamma(\b^s)||,$$ because the shift in the argument of the right
hand side does not change the $C^0$ norm. Therefore, by
\eqref{eqn:perturbation}, we have: $$||{\overline{\b}}^s|| \le
||\pi_u \circ \F_b^{-1} \circ \gamma(\b^s)|| + O(\rho)  = ||\pi_u
\circ (h^{-1})_b \circ \gamma(\b^s)|| + O(\rho). $$ By
\eqref{eqn:esth}, we can further estimate the above: $$
||{\overline{\b}}^s|| \leq (\mu + O(\delta))||\b^s||+O(\rho). $$
Since $\mu<1$ and $||\b^s||\leq \delta/2$, for appropriately chosen
$\rho=O(\delta)$ the above can be made $\leq \delta/2$. This proves
\eqref{eqn:cnorm}. As for \eqref{eqn:lip}, note that
\begin{equation} \label{eqn:estlip} \mLip {\overline{\b}^s} \le
\mLip (\pi_u \circ \G_b^{-1}\circ \gamma(\b^s)) \cdot \mLip
G_{\overline\b, b}. \end{equation} We need to show that the right
hand side of the above is $\leq D\delta/2$. It is enough to do this
for $\b^s$ and $\overline\beta^s$ of class $C^1$, since these maps
are dense in $\B^s$. In this $C^1$ case, we have:

$$
d(\pi_u \circ \G_b^{-1} \circ \gamma(\b^s)) =  d(\pi_u \circ \G_b^{-1})\circ \gamma(\b^s) \cdot d\gamma(\b^s) \leq
$$

$$
\leq \left[d(\pi_u \circ \F_b^{-1})\circ \gamma(\b^s) +O(\rho)\right]\cdot d\gamma(\b^s) \leq
$$

$$ \leq \left[\left( \begin{array}{ccc} \frac {\partial \pi_u \circ
h^{-1}_b}{\partial x_s} & \frac {\pi_u \circ \partial
h^{-1}_b}{\partial x_u} & 0 \\ \end{array} \right) \circ
\gamma(\b^s) +O(\rho) \right] \cdot \left( \begin{array}{cc} 1 & 0
\\ \frac {\partial \beta^s}{\partial x_s} & \frac {\partial
\beta^s}{\partial m} \\ 0 & 1 \\ \end{array} \right) \leq $$

$$ \leq \left[\left( \begin{array}{ccc} 0 & \mu & 0 \\ \end{array}
\right)+O(\delta)+O(\rho) \right] \cdot \left( \begin{array}{cc} 1 &
0 \\ \frac {\partial \beta^s}{\partial x_s} & \frac {\partial
\beta^s}{\partial m} \\ 0 & 1 \\ \end{array} \right) \leq $$

\begin{equation} \label{eqn:ppp} \leq \left( \begin{array}{cc}
\mu\cdot \frac {\partial \beta^s}{\partial x_s} & \mu\cdot \frac
{\partial \beta^s}{\partial m}  \\ \end{array} \right)+O(\delta)+
O(\rho)\leq \mu\cdot  \mLip \beta^s + O(\delta), \end{equation}
$$$$
since $\rho=O(\delta)$. Combining this estimate with
Proposition~\ref{prop:composition} of the Appendix, we see that: $$
\mLip {\overline\beta^s} \leq (\mu \cdot \mLip \beta^s +
O(\delta))\cdot (L + O(\delta))\cdot (1+\textrm{Lip }\overline\b^s).
$$ Since $\mLip \beta^s \leq D\delta/2$, the above gives us: $$
\mLip {\overline\beta^s} \leq \frac {\mu L \cdot D\delta/2+L\cdot
O(\delta)+O(\delta^2)}{1-\mu L \cdot D\delta/2-L\cdot
O(\delta)-O(\delta^2)}. $$ By assumption \eqref{eqn:dom}, we have
$\mu L<1$. Therefore, if we pick the constant $D$ large enough (but
still requiring that $D\delta<<1$), the right hand side of the above
will be $\leq D\delta/2$. This proves \eqref{eqn:lip}. \\

Now that we have proved $\g$ and $\g_b$ to be well-defined, let us
pass to proving \eqref{eqn:cont1} and \eqref{eqn:36a}. As we said
before, the second inequality implies the first, so we will only
prove the second one. As above, write
$\overline\b^s_0=\g_b(\beta^s_0)$ and
$\overline\b^s_1=\g_b(\beta^s_1)$. From \eqref{eqn:graph}, we see
that:

\begin{equation}
\label{eqn:estimates0}
||\overline\b^s_0-\overline\b^s_1|| \leq T_1+T_2,
\end{equation}
where:

$$
T_1 =||\pi_u \circ \G_{b}^{-1} \circ \ga({\b^s_0})\circ G_{\overline\b_0,b} -\pi_u \circ \G_{b}^{-1}
\circ \ga({\b^s_1})\circ G_{\overline\b_0,b}||,
$$

$$
T_2=||\pi_u \circ \G_{b}^{-1} \circ \ga({\b^s_1}) \circ G_{\overline\b_0,b} - \pi_u \circ \G_{b}^{-1}
\circ \ga({\b^s_1}) \circ G_{\overline\b_1,b}||.
$$
As it will soon be clear, $T_1$ is the dominant term:

$$ T_1 \leq \textrm{Lip }(\pi_u \circ \G_{b}^{-1}) \cdot
||\ga({\b^s_0}) -\ga({\b^s_1})||.$$ The second factor in the right
hand side is $\leq ||{\b^s_0} -{\b^s_1}||$. As for the first factor,
we see that: $$ \textrm{Lip }(\pi_u \circ \G_{b}^{-1}) \leq
\textrm{Lip }(\pi_u \circ \F_{b}^{-1}) + O(\rho)=\textrm{Lip }(\pi_u
\circ h_{b}^{-1}) + O(\rho) \leq \mu+O(\rho).$$ Since
$\rho=O(\delta)$, we conclude that:

\begin{equation}
\label{eqn:t1}
T_1 \leq (\mu+O(\delta))\cdot ||{\b^s_0} -{\b^s_1}||.
\end{equation}
As for $T_2$, we see that:

$$ T_2\leq \textrm{Lip }(\pi_u \circ \G_{b}^{-1} \circ
\ga({\b^s_1}))\cdot ||G_{\overline\b_0,b} - G_{\overline\b_1,b}||.
$$ In \eqref{eqn:ppp}, we saw that: $$ \mLip (\pi_u \circ
\G_{b}^{-1} \circ \gamma({\b^s_1})) \leq O(\delta). $$ In
Proposition~\ref{prop:last} of the Appendix, we will prove that $$
||G_{\overline\b_0,b} - G_{\overline\b_1,b}|| \leq O(1)\cdot
||\overline\beta^s_0-\overline\beta^s_1||.$$ Therefore, we obtain:

$$
T_2 \leq O(\delta)\cdot ||\overline\beta^s_0-\overline\beta^s_1||.
$$
Together with \eqref{eqn:t1}, this implies:
$$
||\overline\b^s_0-\overline\b^s_1|| \leq (\mu+O(\delta))\cdot ||{\b^s_0} -{\b^s_1}|| + O(\delta)
\cdot ||\overline\beta^s_0-\overline\beta^s_1|| \Rightarrow
$$
$$
||\overline\b^s_0-\overline\b^s_1|| \leq (\mu+O(\delta))\cdot ||{\b^s_0} -{\b^s_1}||.
$$
This is precisely the desired inequality \eqref{eqn:36a}.
\end{proof}

\subsection{The central lamination}
\label{sub:central}

Corollary \ref{cor:fixed} tells us that there exists a unique
$\G-$invariant central stable lamination $S^s \in \Ga^s$. This can
be presented either via the maps $\beta^s_b$, or via the leaves
$W^s_b$ (as $b$ ranges over $\Lambda$). Similarly, there exists a
unique $\G-$invariant central unstable lamination $S^u\in \Ga^u$.
Let us define the central lamination $S$ via its leaves $W_b$, which
we define by: \begin{equation} \label{eqn:defcentral}
W_b=W^s_b\cap W^u_b. \end{equation} This lamination will be
$\G-$invariant, in the sense of \eqref{eqn:invfol2}. Let us describe
$W_b$ more explicitly. By definition, $$ W^s_b=\textrm{Im
}(\varphi_b \times \textrm{Id})\{(x_s,\beta^s_b(x_s,m),m)|\textrm{
}x_s\in Q^s, m\in M\} $$ $$ W^u_b=\textrm{Im }(\varphi_b \times
\textrm{Id})\{(\beta^u_b(x_u,m),x_u,m)|\textrm{ }x_u\in Q^u, m\in
M\}, $$ where $\beta_b^s,\beta_b^u$ have Lipschitz norms at most
$D\delta/2<<1$. Then, for each $m\in M$, the system of equations
\begin{equation} \label{eqn:system} \left\{
  \begin{array}{ll}
  x_s=\beta^u_b(x_u,m) \\
  x_u=\beta^s_b(x_s,m)
  \end{array}
\right. \end{equation} has a unique solution
$(x_s,x_u)=:\beta_b(m)\in Q^s\times Q^u$. Indeed, for any fixed $m$
the maps $\b^s_b \circ \b^u_b: Q^u \to Q^u$ and $\b^u_b \circ
\b^s_b: Q^s \to Q^s$ are Lipshitz with constant $\le {(L\delta
)}^2 << 1$. Then each of the two maps is contracting and has a
unique fixed point: call these $x_u$ and $x_s$, respectively. Then the pair $(x_s,x_u)$ is the solution of \eqref{eqn:system}, and the above map $\beta_b$ is well-defined. If we define the map
$\widetilde{\beta}_b=\varphi_b(\beta_b):M \rightarrow B$, then its
graph is precisely $W_b$: $$
W_b=\{(\widetilde{\beta}_b(m),m)|\textrm{ }m\in M\}. $$ Because it
is the intersection of an invariant central-stable lamination with
an invariant central-unstable lamination, $S=(\beta_b)=(W_b)$ is an
invariant central lamination. \\

It is not hard to see from \eqref{eqn:system} that
\begin{equation}
\label{eqn:smallb}
||\beta_b||_{C^0} \leq \frac {\delta}2 \textrm{ }\textrm{ }\textrm{ and }\textrm{ }\textrm{ }\frac {\mLip \beta_b}D \leq \delta.
\end{equation}
Because the chart $\ph_b$ is metric preserving at 0 and smooth in the domain $Q^s\times Q^u$ (which has diameter of order $\delta$),
we have:
\begin{equation}
\label{eqn:smalltildeb}
d(\widetilde{\beta}_b,b)_{C^0} \leq O(\delta)=O(\rho) \textrm{, }\textrm{ }\textrm{ }\textrm{ }\textrm{ }\mLip
\widetilde{\beta}_b \leq O(\delta)=O(\rho).
\end{equation}

\begin{proof} \textbf{of statement a) of Theorem~\ref{thm:mt}: } Start from the $\G-$invariant central
lamination $S$ constructed above, and define $Y=\bigcup_{b\in \Lambda} W_b$. This union is obviously an invariant set of $\G$,
and moreover the following proposition implies that it is actually a disjoint union.

\begin{Prop} \label{prop:disjointfibers} For all $b\neq b' \in
\Lambda$, the corresponding central leaves are disjoint: $$ W_b\cap
W_{b'}=\emptyset. $$ \end{Prop}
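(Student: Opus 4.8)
The plan is to argue by contradiction using the $\G$-invariance of the central-stable and central-unstable laminations together with the contraction estimates established in Lemma~\ref{lem:1}. Suppose $W_b \cap W_{b'} \neq \emptyset$ for some $b \neq b'$ in $\Lambda$. A point in the intersection lies in $W^s_b \cap W^s_{b'}$ and in $W^u_b \cap W^u_{b'}$. First I would observe that the central-stable leaves of \emph{distinct} base points are themselves disjoint: this is essentially the uniqueness of the fixed point of the graph transform, but it has to be phrased carefully because $W^s_b$ and $W^s_{b'}$ are graphs over the \emph{same} coordinate domain $Q^s \times M$ only when $b$ and $b'$ are close in the base; in general one should pass to the global picture in $B \times M$ via the charts $\ph_b$. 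The cleaner route is: if $z \in W^s_b \cap W^s_{b'}$ and $z \in W^u_b \cap W^u_{b'}$, then applying $\G$ (for the stable part) and $\G^{-1}$ (for the unstable part) and using \eqref{eqn:invfol1}, one gets $\G^n(z) \in W^s_{h^n(b)} \cap W^s_{h^n(b')}$ for all $n \geq 0$ and $\G^{-n}(z) \in W^u_{h^{-n}(b)} \cap W^u_{h^{-n}(b')}$ for all $n \geq 0$ (on $\Lambda$ the map $h$ is invertible).

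Next I would exploit the fact that both orbits stay in the thin neighborhood of $\Lambda$ where everything is expressed in the charts $\ph_{b}$, with the leaves being $O(\delta)$-Lipschitz graphs. The key point is a forward/backward contraction dichotomy. Write $b_n = h^n(b)$, $b'_n = h^n(b')$. Since $b \neq b'$ and $\Lambda$ is hyperbolic, either $d(b_n, b'_n)$ does not go to $0$ as $n \to +\infty$, or $d(b_{-n}, b'_{-n})$ does not go to $0$ as $n \to +\infty$ — more precisely, hyperbolicity of $h$ on $\Lambda$ forces $\sup_{n \in \zz} d(h^n(b), h^n(b')) \geq$ some definite constant (expansivity), and in fact at least one of the two forward orbits separates. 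I would then show that if, say, the forward orbits separate, this is incompatible with $\G^n(z)$ lying simultaneously on $W^s_{b_n}$ and $W^s_{b'_n}$: the stable leaves through nearby base points are $O(\delta)$-apart in the $C^0$ sense controlled by $d(b_n, b'_n)$ (this uses that the invariant section $S^s \in \Ga^s$ is \emph{continuous} in $b$, indeed Lipschitz/Hölder as will follow later, but continuity plus the explicit chart geometry is already enough here), so a common point would force $d(b_n, b'_n)$ to stay bounded by $O(\delta)$; combined with expansivity this pins $b = b'$. The unstable leaves handle the case where the backward orbits separate, symmetrically.

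A slightly more hands-on variant, which I would actually prefer to write up, is this: assume $b, b'$ are close enough that $W_b$ and $W_{b'}$ both live in a common chart $\ph_{b''}$ (otherwise $d(b,b')$ is already bounded below and there is nothing to prove for a different reason), and compare the maps $\widetilde\beta_b, \widetilde\beta_{b'}$ directly. A common point $m \in M$ with $\widetilde\beta_b(m) = \widetilde\beta_{b'}(m)$ means the two solutions of the systems \eqref{eqn:system} for $b$ and for $b'$ coincide at that $m$. Pushing forward by $\G$ and using \eqref{eqn:36a} (contraction of $\g_b$ with constant $\mu + O(\delta) < 1$ on the stable side, and the analogous expansion-to-contraction statement for $\g_b^u$), the distance between the base points of the two leaves through $\G^n(z)$ would have to satisfy a recursion forcing it to grow, contradicting $\G^n(z) \in Y$ with $Y$ bounded. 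The main obstacle, and the step deserving the most care, is the bookkeeping of \emph{which} chart each leaf is expressed in along the orbit: the neat pointwise estimates from Lemma~\ref{lem:1} are local, so one must check that the orbit $\{h^n(b)\}$ and $\{h^n(b')\}$ can be tracked in a consistent family of charts as long as they remain $O(\delta)$-close, and quantify the separation precisely enough to close the loop. I expect this to reduce to standard shadowing/expansivity for the hyperbolic set $\Lambda$, applied on the $O(\delta)$ scale.
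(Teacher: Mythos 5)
Your first argument is essentially the paper's own proof: the paper takes a common point of $W_b$ and $W_{b'}$, pushes it by $\G^k$ for all $k\in\zz$ using the invariance of the central lamination, notes via \eqref{eqn:smalltildeb} that the base coordinate of that point is within $O(\rho)$ of both $h^k(b)$ and $h^k(b')$, hence $d(h^k(b),h^k(b'))\le 2\,O(\rho)$ for all $k$, and contradicts expansivity of $h$ on $\Lambda$. The only cosmetic difference is that the paper works directly with the central leaves (graphs of $\widetilde{\beta}_b$ over $M$) in both time directions, so the center-stable/center-unstable split and the chart bookkeeping you worry about in your second variant are not needed.
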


\begin{proof} Let us assume by contraposition that $W_b\cap
W_{b'} \neq \emptyset$. By the $\G-$ invariance of the lamination,
then $$ W_{h^k(b)}\cap W_{h^k(b')} \neq \emptyset, $$ for all $k\in
\zz$. Pick a point $(\tilde{b},m)$ in the above non-empty
intersection. Then \begin{equation} \label{eqn:tildeb}
\widetilde{b}=\widetilde{\beta}_{h^k(b)}(m)=\widetilde{\beta}_{h^k(b')}(m)
\end{equation} By \eqref{eqn:smalltildeb}, the point
$\widetilde{\beta}_{h^k(b)}(m)$ is at distance at most $O(\rho)$
from $h^k(b)$. Similarly, $\widetilde{\beta}_{h^k(b')}(m)$ is at
distance at most $O(\rho)$ from $h^k(b')$. This implies that
$h^k(b)$ and $h^k(b')$ are at most $2\cdot O(\rho)$ apart, for all
$k\in \mathbb{Z}$. This is obviously impossible for $\rho$ small
enough, because for such $\rho $, the quantity $ O(\rho )$  is
smaller than the expansivity constant of $h$. \\ \end{proof}

Therefore, the map $p:Y \rightarrow \Lambda$ sending $W_b$ to $b$ is well-defined. Moreover,
the $\G-$invariance of the lamination $S=(W_b)$ is precisely equivalent to the commutativity of the diagram \eqref{eqn:semi}.
The continuity of $p$ follows from the continuity of our laminations, and this also
implies that the map $H$ of \eqref{eqn:homeo1} is continuous. \\

Note that the map $H$ is bijective, with inverse given by $H^{-1}(b,m)=(\widetilde{\beta}_b(m),m)$.
The map $H^{-1}$ is clearly continuous in $m$, and continuity in $b$ follows from the \ho continuity statement \eqref{eqn:hol},
which will be proved in the next subsection. Therefore $H$ is a homeomorphism, thus concluding the proof of statement $a)$.
\end{proof}

\section{H\"{o}lder continuity of the central lamination}
\label{sec:hold}

This section will be concerned with the proof of statement $b)$ of
Theorem~\ref{thm:mt}. By definition, we have
$p^{-1}(b)=W_b=\textrm{Graph}(\widetilde{\beta}_b)$, where
$\widetilde{\beta}_b$ satisfies relations \eqref{eqn:smalltildeb}. This
is precisely the requirement \eqref{eqn:fibers}. In this section
we will prove the rest of statement $b)$, which refers to \ho
continuity. \\

First, for any $b\in \Lambda$, we will define its \emph{local
central-stable} and \emph{central-unstable manifolds} as $$ V^s_b=\{b'\in B|d(h^n(b'),h^n(b))\leq \delta
\textrm{, }\forall n\geq 0\}, $$ $$ V^u_b=\{b'\in
B|d(h^{-n}(b'),h^{-n}(b))\leq \delta \textrm{, }\forall n\geq 0\}.
$$

\begin{Prop} \label{prop:inthyp} Let $h, \Lambda $ and $d$ be the
same as at the beginning of Subsection~\ref{sub:perhol}. Then the
following statements hold for all $b,b'\in \Lambda$:
\begin{enumerate}
  \item $\emph{if } b'\in V^s_b \emph{ and } d(h^{-1}(b), h^{-1}(b'))\leq \delta \Rightarrow h^{-1}(b')\in V^s_{h^{-1}(b)}$
  \item $\emph{if } b'\in V^u_b \emph{ and } d(h(b), h(b'))\leq \delta \Rightarrow h(b')\in V^u_{h(b)}$
  \item $\emph{if } b'\in V^s_b \Rightarrow \lambda_- - O(\delta) \leq \dsp \frac {d(h(b),h(b'))}{d(b,b')}\leq \lambda+O(\delta) $
  \item $\emph{if } b'\in V^u_b \Rightarrow \mu_- - O(\delta) \leq \dsp \frac {d(h^{-1}(b),h^{-1}(b'))}{d(b,b')} \leq \mu+O(\delta)$
\end{enumerate}
\end{Prop}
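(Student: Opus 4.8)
The plan is to work entirely in the local coordinate charts $\ph_b$ introduced in Subsection~\ref{sub:lam}, where $h$ takes the form $h_b$ with $dh_b(0) = \mathrm{diag}(A^u, A^s)$ and the estimate \eqref{eqn:esth} holds: $\|dh_b - \mathrm{diag}(A^u,A^s)\|_{C^0} \le O(\delta)$ throughout $Q^s \times Q^u$. Recall $\lambda_- \le |A^s| \le \lambda$ and $\mu_- \le |A^{-1}_u| \le \mu$. First I would recall the standard characterization of the local stable/unstable sets: $V^s_b$ is a Lipschitz graph over $Q^s_b$ (a small perturbation of the $E^s$-subspace) and $V^u_b$ is a Lipschitz graph over $Q^u_b$, with Lipschitz constants $O(\delta)$; this is itself a graph-transform fixed-point statement (for the \emph{unperturbed} map $h$ this time) and follows from the hyperbolicity of $\Lambda$ exactly as in the classical Hadamard--Perron theorem, so I would either cite \cite{HPS} or observe that it is the $\rho = 0$ specialization of the construction already carried out for $\G$.

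For statements (1) and (2): these are the elementary ``invariance up to staying in the chart'' properties. If $b' \in V^s_b$ then $d(h^n(b'), h^n(b)) \le \delta$ for all $n \ge 0$ by definition; if additionally $d(h^{-1}(b), h^{-1}(b')) \le \delta$, then the sequence $h^{-1}(b'), b', h(b'), \dots$ stays within $\delta$ of the corresponding orbit of $h^{-1}(b)$ for all indices $\ge 0$, which is exactly the defining condition for $h^{-1}(b') \in V^s_{h^{-1}(b)}$. Statement (2) is the time-reversed version, using that $h$ is a morphism so $h^{-1}$ is well-defined on the relevant neighborhood. No estimates are needed here beyond unwinding definitions.

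The substance is in (3) and (4), and by symmetry (applying (3) to $h^{-1}$, whose stable set is $V^u$) it suffices to treat (3). Given $b' \in V^s_b$, I would pass to the chart $\ph_b$ and write $\xi' = \ph_b^{-1}(b') \in Q^s_b \times Q^u_b$; since $b'$ lies on the local stable graph through $0$, its $E^u$-component is $O(\delta)$ times its $E^s$-component, so $\xi'$ is essentially horizontal. Now
$$
\ph_{h(b)}^{-1}(h(b')) = h_b(\xi') = h_b(\xi') - h_b(0) = \int_0^1 dh_b(t\xi')\,\xi'\, dt,
$$
and by \eqref{eqn:esth} the integrand is $\bigl(\mathrm{diag}(A^u, A^s) + O(\delta)\bigr)\xi'$. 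Because $\xi'$ has $E^u$-component of size $O(\delta)|\xi'|$ and $|A^s|$ is bounded between $\lambda_-$ and $\lambda$, applying $\mathrm{diag}(A^u, A^s)$ to $\xi'$ gives a vector whose norm lies between $(\lambda_- - O(\delta))|\xi'|$ and $(\lambda + O(\delta))|\xi'|$ — the potentially expanding $A^u$-block only acts on the $O(\delta)|\xi'|$-small unstable component, contributing at most $O(\delta)|\xi'|$ after using $|A^u| = O(\mu_-^{-1}) = O(1)$. Finally, since the charts $\ph_b$ and $\ph_{h(b)}$ are metric-preserving at the origin and smooth on a domain of diameter $O(\delta)$, comparing $d(h(b), h(b'))$ with $|h_b(\xi')|$ and $d(b,b')$ with $|\xi'|$ introduces only a further multiplicative factor $1 + O(\delta)$. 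Absorbing all the $O(\delta)$ errors gives $\lambda_- - O(\delta) \le d(h(b),h(b'))/d(b,b') \le \lambda + O(\delta)$, as claimed.

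The main obstacle I anticipate is purely bookkeeping: ensuring that the ``$b' \in V^s_b$ forces the unstable component of $\xi'$ to be $O(\delta)$ times its stable component'' step is genuinely available — i.e., that the local stable set really is a Lipschitz graph with slope $O(\delta)$ and not merely $O(1)$. This is where the hyperbolicity of $\Lambda$ (the full contraction/expansion dichotomy, \eqref{eqn:hypstructure}) enters, as opposed to just the dominated-splitting bound, and it is worth stating explicitly that we invoke the Hadamard--Perron graph-transform construction for $h$ itself. Once that slope bound is in hand, everything else is a one-line linearization estimate; there is no fixed-point argument and no use of the perturbation $\G$ at all, since this proposition is purely a statement about the base map $h$.
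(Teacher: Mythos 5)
Your proposal is correct and takes essentially the same route as the paper: statements (1) and (2) by unwinding the definitions, and (3), (4) from the fact that $V^s_b$, $V^u_b$ are smooth invariant leaves tangent to $E^s$, $E^u$ at $b$, combined with the hyperbolicity bounds and the $C^2$-smoothness of $h$. The paper states this in two lines and leaves the quantitative step implicit; your chart computation with \eqref{eqn:esth} and the $O(\delta)$ slope bound simply writes out that same argument in detail.
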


\begin{proof} Statements 1 and 2 follow immediately from the
definitions of $V^s_b,V^u_b$. We will now prove Statements $3$ and
$4$. The map $h$ has invariant stable and unstable laminations;
$V^s_b, V^u_b$ are the leaves of these laminations. They are smooth
manifolds, and $V^s_b (V^u_b)$ is tangent at $b$ to $E^s (E^u)$. Now
Statements $3$ and $4$ follow from \eqref{eqn:hypstructure} and the
$C^2$-smoothness of $h$. \end{proof}

We further ask that  $h$ has the following \emph{local product
structure}: for all $b,b'\in \Lambda$ such that $d(b,b')\leq
\delta$, there exists a unique $b^*\in B$ such that \begin{equation}
 \label{eqn:prodstr} V^u_b \cap V^s_{b'}=\{b^*\},\end{equation} and moreover: \begin{equation}
\label{eqn:prod} d(b,b^*)+d(b',b^*)\leq O(d(b,b')). \end{equation}
This property is easily seen to hold for linear Anosov
diffeomorphisms of the torus, because then $V^s_b$ and $V^u_{b'}$
are just straight lines that meet transversely under a fixed angle
independent of $b,b'$. It also holds for the Smale-Williams
solenoid, because then $V^s_{b}=\{y(b)\}\times D$ and $V^u_{b'}$ is
a curve that intersects $V^s_{b}$ transversely (such that the angle
between $V^s_b$ and $V^u_{b'}$ is separated from zero). \\

\begin{proof} \textbf{of statement b) of Theorem~\ref{thm:mt}: } We
have already proved the closeness property \eqref{eqn:fibers} in
relation \eqref{eqn:smalltildeb} above. As for the H\"older property
\eqref{eqn:hol}, it is enough to prove it for $b,b'$ which are at
most $\delta $ apart. Indeed, for any $\a > 0$ and any $b,b'$ with
$d(b,b') > \delta $, we have by default: \begin{equation}
\label{eqn:hhol} d(h(b), h(b')) \le Cd^\a (b,b'),\end{equation}
where $C = \frac {\mbox{diam} B}{\delta^\a }$. Therefore, we can
restrict attention to $b,b'$ that are such that the unique point
$b^*$ of \eqref{eqn:prodstr} satisfies:

\begin{equation}
\label{eqn:closeness} d(b,b^*)  \leq \delta, \textrm{ }d(b',b^*) \leq \delta, \textrm{ }
d(b,b') \leq \delta, \end{equation} For such nearby $b,b'$, we
essentially need to estimate the distance between the maps
$\widetilde{\beta}_b,\widetilde{\beta}_{b'}:M \rightarrow B$. These
maps were defined by the condition that their graphs coincide with
$W^s_b \cap W^u_b$ and $W^s_{b'}\cap W^u_{b'}$, respectively. \\

However, this is a bit of an issue: \emph{different} leaves $W^s_b$
and $W^s_{b^*}$ (and also their $u$ counterparts) are defined using
\emph{different} coordinate charts $\ph_b$ and $\ph_{b^*}$. To
resolve this problem in the $s$-case (the $u$-case is treated in
the same way), let us write $W^s_{b}$ as the graph of a function
$\overline{\b}_b^s:Q^s \times M \rightarrow Q^u$ in the coordinate
chart $\ph_{b^*}$: $$ \{(\ph_b \times
\textrm{Id}_M)(x_s,\beta^s_b(x_s,m),m)\}
\stackrel{\textrm{def}}{=}W^s_b =\{(\ph_{b^*} \times
\textrm{Id}_M)(x_s,\overline{\beta}^s_b(x_s,m),m)\} $$ The function
$\overline{\beta}^s_b$ is defined uniquely and implicitly by the
above relation, but we must require the inclusion
$\textrm{Im}(\ph_b) \subset \textrm{Im}(\ph_{b^*})$. We certainly
cannot ensure this if we define the charts $\ph_b$ and $\ph_{b^*}$
with respect to the same $\delta$ in \eqref{eqn:fixiso}. But if we
define $\ph_{b^*}$ with respect to $3\delta$ instead of $\delta$
(i.e. define the chart on a neighborhood 3 times bigger), then the
desired inclusion becomes a consequence of \eqref{eqn:closeness}.

\begin{Def} With the assumption \eqref{eqn:closeness},  we define the
distance between the leaves corresponding to $d, b^*$, to be $$
d(W^s_b,W^s_{b^*}) := ||\beta^s_{b^*}-\overline{\beta}^s_b||. $$
\end{Def}

Implicit in the definition is the fact that the right hand side only
makes sense on the domain of $\overline{\beta}^s_b$, which as was
said before, is contained in the domain of $\beta^s_{b^*}$. Note
that the above definition is \emph{not} symmetric in $b$ and $b^*$.
\\

Now we must look at what happens with these leaves under the graph
transform. Take two leaves $W^s_{h(b)}$ and $W^s_{h(b^*)}$, given in
the coordinate chart $\ph_{h(b^*)}$ by maps
$\overline{\beta}^s_{h(b)}$ and $\beta^s_{h(b^*)}$, respectively.
Then take their images under the graph transform $W^s_{b}$ and
$W^s_{b^*}$, given in the coordinate chart $\ph_{b^*}$ by maps
$\overline{\beta}^s_{b}$ and $\beta^s_{b^*}$. The inequality
\eqref{eqn:36a} of Lemma \ref{lem:1} precisely says that:
\begin{equation} \label{eqn:contWs} d(W^s_b, W^s_{b^*}) \leq
(\mu+O(\delta)) \cdot d(W^s_{h(b)}, W^s_{h(b^*)}). \end{equation}
Doing the analogous computations for central-unstable foliations, we
see that: \begin{equation} \label{eqn:contWu} d(W^u_{b'}, W^u_{b^*})
\leq (\lambda+O(\delta)) \cdot d(W^u_{h^{-1}(b')},
W^u_{h^{-1}(b^*)}). \end{equation} Now recall that we fixed points
$b,b'$ satisfying relation \eqref{eqn:closeness}. Let us consider
the positive integers: $$ k=\left\lfloor \log_{\mu_--O(\rho)} \frac
{d(b,b^*)}{\delta}\right\rfloor, \textrm{ }\textrm{ }\textrm{
}l=\left\lfloor \log_{\lambda_--O(\rho)} \frac
{d(b',b^*)}{\delta}\right\rfloor. $$ Iterate relation
\eqref{eqn:contWs} $k$ times, and we obtain: $$ d(W^s_b, W^s_{b^*})
\leq (\mu+O(\delta))^k \cdot d(W^s_{h^k(b)}, W^s_{h^k(b^*)}). $$ By
the definition of $k$ (and property 4 of
Proposition~\ref{prop:inthyp}), $k$ is the biggest positive integer which
would ensure that the points $h^k(b)$ and $h^k(b^*)$ remain at most
distance $\delta$ apart. Indeed, if they were at a bigger distance
apart, the entire discussion above would break down. But since the
distance between $h^k(b)$ and $h^k(b^*)$ is at most $\delta$, we
infer that the distance between the corresponding leaves is also at
most $O(\delta)$. Therefore, the above inequality implies:

$$ d(W^s_b, W^s_{b^*}) \leq (\mu+O(\delta))^k \cdot O(\delta) \leq
d(b,b^*)^{\frac {\ln \mu}{\ln \mu_-}-O(\delta)} \cdot O(1). $$ The
analogous discussion with $l,\lambda,b',u$ instead of $k,\mu,b,s$
gives us: $$ d(W^u_{b'}, W^u_{b^*}) \leq (\lambda+O(\delta))^l \cdot
O (\delta )\leq d(b',b^*)^{\frac {\ln \lambda}{\ln
\lambda-}-O(\delta)} \cdot O(1). $$   Letting $\alpha$ be defined as
in \eqref{eqn:defalpha}, the above relations give us:

\begin{equation} \label{eqn:aa} d(W^s_b, W^s_{b^*}) \leq
d(b,b^*)^{\alpha-O(\delta)} \cdot O(1),\ \ \textrm{ }d(W^u_{b'},
W^u_{b^*}) \leq d(b',b^*)^{\alpha-O(\delta)} \cdot O(1).
\end{equation}
Let's now prove that \begin{equation} \label{eqn:inc} W^s_{b'}
\subset W^s_{b^*}, \textrm{ }\textrm{ }\textrm{ and analogously
}\textrm{ }\textrm{ } W^u_b \subset W^u_{b^*}. \end{equation}
Relation \eqref{eqn:contWs} for $b$ replaced with $b'$ becomes: $$
d(W^s_{b'}, W^s_{b^*}) \leq (\mu+O(\delta)) \cdot d(W^s_{h(b')},
W^s_{h(b^*)}). $$ However, since $b'\in V^s_{b^*}$, then the map $h$
actually brings the points $b'$ and $b^*$ closer together (by
property 3 of Proposition~\ref{prop:inthyp}). So we can iterate the
above inequalities as many times as we want. We see that: $$
d(W^s_{b'}, W^s_{b^*}) \leq (\mu+O(\delta))^i \cdot d(W^s_{h^i(b')},
W^s_{h^i(b^*)}), $$ for any $i>0$. As $i \rightarrow \infty$, this
implies $d(W^s_{b'}, W^s_{b^*})=0$.  This proves \eqref{eqn:inc} in
the $s$-case. The proof in the $u$-case is similar. \\

We can now turn to the proof of \eqref{eqn:hol}, thus completing the proof of
Theorem~\ref{thm:mt}. Recall that for any $b \in B, \ W_b =
W^s_b \cap W^u_b$. Let us first prove that \begin{equation}
\label{eqn:star} d(W_b, W_{b^*}) \le d(b, b_*)^{\a -O(\delta )}\cdot O(1),
\ d(W_{b'}, W_{b^*}) \le d(b', b_*)^{\a -O(\delta )}\cdot O(1).
\end{equation}
By \eqref{eqn:inc} we have: \begin{equation}  \label{eqn:center} W_b
= W^s_b \cap W^u_{b}, \ W_{b^*} = W^s_{b^*} \cap W^u_{b^*}.
\end{equation} In the chart $\ph_{b^*} \times \textrm{Id}$, the leaves
$W^s_{b^*}$, $W^u_{b^*}$, $W^s_b$, $W^u_{b}$, $W^s_{b'}$, $W^u_{b'}$
are given by maps $\beta^s_{b^*}$, $\beta^u_{b^*}$,
$\overline{\beta}^s_b$, $\overline{\beta}^u_{b}$,
$\overline{\beta}^s_{b'}$, $\overline{\beta}^u_{b'}$. Then
\eqref{eqn:aa} gives us: $$ ||\beta^s_{b^*}-\overline{\beta}^s_b||,
\textrm{ }||\beta^u_{b^*}-\overline{\beta}^u_{b'}|| \leq
d(b,b')^{\alpha-O(\delta)} \cdot O(1), $$ while \eqref{eqn:inc}
gives us: $$ \beta^s_{b^*} = \overline{\beta}^s_{b'}, \textrm{ }
\beta^u_{b^*}=\overline{\beta}^u_b. $$ Of course, when one reads the
above inequalities, one should keep in mind that the maps
$\beta^{s,u}_{b^*}$ are defined on a neighborhood 3 times bigger
than the maps $\overline{\beta}^{s,u}_{b,b'}$. Actually, the domain
of the maps $\beta^{s,u}_{b^*}$ strictly contains the domain of the
maps $\overline{\beta}^{s,u}_{b,b'}$. Therefore the above relations
should be understood on the smaller domain, on which the maps
$\overline{\beta}^{s,u}_{b,b'}$ are actually defined. \\

Now, relation \eqref{eqn:center} is equivalent to $\overline{\b}_b(m) = (x_s, x_u)$ and $\b_{b^*}(m) = (x^*_s, x^*_u)$,
where: \begin{equation} \label{eqn:systems2}\begin{cases} x_s = \overline{\b}^u_{b}(x_u, m) \\ x_u = \overline{\b}^s_b(x_s,
m) \end{cases} \ \begin{cases} x_s^* = \b^u_{b^*}(x^*_u,m) \\
x_u^* = \b^s_{b^*}(x^*_s, m) \end{cases}\end{equation}

For fixed $m$, the solutions $(x_s, x_u)$ and $(x^*_s, x^*_u)$ are
fixed points of the contracting maps $\overline{\b}^s_b \circ \overline{\b}^u_b \times
\overline{\b}^u_b \circ \overline{\b}^s_b: Q^s \times Q^u \to Q^s \times Q^u$ and
$\b^s_{b^*} \circ \b^u_{b^*} \times \b^u_{b^*} \circ \b^s_{b^*}: Q^s
\times Q^u \to Q^s \times Q^u$, respectively. The contraction coefficient is $<<1$,
uniformly in $m$ and $b$. Therefore, the systems \eqref{eqn:systems2} have a unique
solution for each $m$. \\

As was shown in \eqref{eqn:aa} and \eqref{eqn:inc}, the maps $\beta^{s,u}_{b^*}$
and $\overline{\beta}^{s,u}_b$ of \eqref{eqn:systems2} are \ho continuous in $b$.
Therefore, the unique solutions of the systems \eqref{eqn:systems2} are also \ho
continuous in $b$, and thus so are the maps $\beta_{b^*}$ and $\overline{\beta}_b$.
Therefore, we have analogues of \eqref{eqn:aa}:

$$
||\overline{\b}_b - \b_{b^*}|| \le d(b,b^*)^{\a-O(\delta)}\cdot O(1),
\textrm{ }\textrm{ }\textrm{ }||\overline{\b}_{b'} - \b_{b^{*}}||
\le d(b',b^{*})^{\a-O(\delta)} \cdot O(1).
$$
By the triangle inequality, this implies:

$$
||\overline{\b}_b - \overline{\b}_{b'}|| \le (d(b,b^*)^{\a-O(\delta)}  + d(b',b^*)^{\a-O(\delta)})\cdot O(1) \le
$$
$$
\leq 2{(d(b,b^*) + d(b', b^*))}^{\a-O(\delta)}\cdot O(1) \le d(b,b')^{\a-O(\delta)} \cdot O(1),
$$
where the last inequality follows from \eqref{eqn:prod}. This proves the desired inequality in the chart $\ph_{b^*} \times
\textrm{Id}$ (that is, for the maps $\overline{\b}_{b}, \overline{\b}_{b'}:M
\longrightarrow Q^s \times Q^u$). On the manifold (that is, for the maps
$\widetilde{\b}_{b},\widetilde{\b}_{b'}:M \longrightarrow B$), the analogous
relation follows from the fact that the derivative of $\ph_{b^*}$ at $0$ is identity. \\

Therefore, relation \eqref{eqn:hol} is proved. Note,
that $O(1)$ above is a constant not depending on $b, b'$, but
depending on $\delta $ as in \eqref{eqn:hhol}. We have put $\rho $ in
the denominator of \eqref{eqn:inc} instead of $\delta $, because
$\rho = O(\delta )$. Finally, the inverse map $H^{-1}$ of
\eqref{eqn:homeo1} is explicitly given as $$
H^{-1}(b,m)=(\widetilde{\beta}_b(m),m). $$ This map is Lipschitz in
the variable $m$, and \ho continuous in the variable $b$ by
\eqref{eqn:hol}. Therefore $H^{-1}$ is \ho continuous. This
concludes the proof of Theorem~\ref{thm:mt}. \end{proof}

\section{\ho continuity of center-stable foliation} \label{sec:st}

In this section we complete the proof of Theorem~\ref{thm:st}.
Recall that in this theorem the map $h$ is a skew product itself,
see \eqref{eqn:CD}, whose  fibers are globally defined stable
manifolds for $h$. In this case, we will see that the central-stable
leaves of $\G$ can also be globally defined. \\

By analogy with Subsection~\ref{sub:lam}, for $z\in Z$ a
\emph{global central-stable leaf} is defined as a Lipschitz function
\begin{equation} \label{eqn:glob} \beta^s_z:F\times M \rightarrow Z, \end{equation} and its graph is defined as
$$ W^s_z=\gamma(\beta^s_z)=\{(\beta^s_z(f,m),f,m)| (f,m)\in F\times
M\}. $$ We ask that our leaves be Lipschitz close to the constant
function $z$, in the sense that: \begin{equation} \label{eqn:small2}
\max\left\{d(\beta^s_z,z)_{C^0}, \frac {\mLip \beta^s_z}{D}\right\}
\leq \frac {\delta}2. \end{equation} Finally, a \emph{global
central-stable lamination} is defined as a continuous assignment
$S^s=(\beta^s_z)=(W^s_z)$ of such leaves, as $z$ ranges over $Z$.
Such a lamination is called $\G-$invariant if \begin{equation}
\label{eqn:invfolcs} \G(W^s_z) = W^s_{\zeta(z)}, \textrm{ }\forall
z\in Z, \end{equation} where $D$ is so chosen that the estimates in the (sketch of
the) proof below work out. All these constructions are analogous to the
ones in Subsection~\ref{sub:lam}. Moreover, the entire machinery of
Lemma~\ref{lem:1} applies to our situation and produces a unique
$\G-$invariant lamination $S^s=(\beta^s_z)$ satisfying
\eqref{eqn:small2} for $D$ properly chosen. We will henceforth focus
solely on this lamination. In particular, since $\zeta$ is expanding
we obtain: \begin{equation} \label{eqn:hold0} d(\beta^s_z
,\beta^s_{z'})_{C^0} \le \frac
{d(z,z')^{\alpha-O(\rho)}}{O(\rho)^{\alpha}}, \textrm{ }\textrm{
}\textrm{ where }\alpha=\frac {\ln \mu}{\ln \mu_-}. \end{equation}
This is proven in analogous fashion to statement b) of
Theorem~\ref{thm:mt}, which was proved in the previous Section. \\

\begin{proof} \textbf{of Theorem~\ref{thm:st}:}

\begin{Prop}
\label{prop:whole phase space}
The leaves $W^s_z=\emph{Graph}(\beta^s_z)$ are disjoint and they cover the whole of $X$:
$$
X=\bigsqcup_{z\in Z} W^s_z.
$$
\end{Prop}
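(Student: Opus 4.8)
The plan is to establish the two halves of the statement separately: disjointness of the leaves $W^s_z$, and the fact that they fill up $X$.

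For disjointness, I would argue exactly as in Proposition~\ref{prop:disjointfibers}. Suppose $W^s_z \cap W^s_{z'} \neq \emptyset$ for some $z \neq z'$. By the $\G$-invariance \eqref{eqn:invfolcs} of the lamination, $\G^k(W^s_z \cap W^s_{z'}) \subset W^s_{\zeta^k(z)} \cap W^s_{\zeta^k(z')}$ for every $k \geq 0$, so these intersections are all non-empty. Picking a point in $W^s_{\zeta^k(z)} \cap W^s_{\zeta^k(z')}$ and using the closeness estimate \eqref{eqn:small2} (which forces $\beta^s_z(f,m)$ to lie within $O(\rho)$ of $z$ for all arguments), we conclude that $\zeta^k(z)$ and $\zeta^k(z')$ are at most $2\cdot O(\rho)$ apart for all $k \geq 0$. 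Since $\zeta$ is expanding, iterating forward pushes $z$ and $z'$ apart until they exceed the expansivity constant, which contradicts the bound once $\rho$ is small enough. Hence $W^s_z \cap W^s_{z'} = \emptyset$, and the union $\bigsqcup_{z} W^s_z$ is genuinely disjoint.

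For the covering statement, the point is that each $W^s_z$ is the graph of a function $\beta^s_z : F \times M \to Z$ defined on \emph{all} of $F \times M$ — there is no local chart restricting the domain, unlike in Section~\ref{sec:hold}. So the union $\bigcup_z W^s_z$ consists of all points $(\beta^s_z(f,m), f, m)$ with $z \in Z$, $(f,m) \in F \times M$. To see this is all of $X = Z \times F \times M$, fix an arbitrary $(z_0, f, m) \in X$; I need to find $z$ with $\beta^s_z(f,m) = z_0$. Consider the map $\Phi : Z \to Z$, $\Phi(z) = z_0 - (\beta^s_z(f,m) - z)$ — more precisely, since the leaves are Lipschitz-$O(\rho)$-close to constants by \eqref{eqn:small2}, the assignment $z \mapsto \beta^s_z(f,m)$ is an $O(\rho)$-perturbation of the identity on $Z$, hence (for $\rho$ small) a homeomorphism of $Z$ onto itself, by a standard degree / invariance-of-domain argument or by the fact that a $C^0$-small perturbation of the identity on a compact manifold is surjective. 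Therefore there is a (unique, by disjointness) $z$ with $\beta^s_z(f,m) = z_0$, so $(z_0, f, m) \in W^s_z$. This gives $X = \bigsqcup_{z \in Z} W^s_z$.

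The main obstacle is the surjectivity of $z \mapsto \beta^s_z(f,m)$: for general $Z$ one must invoke that a $C^0$-small perturbation of $\mathrm{id}_Z$ is onto (true for compact $Z$, e.g. by mapping degree), and in the case $Z = S^1$ this is elementary. One should also be slightly careful that the $O(\rho)$ closeness is uniform in $(f,m)$, which is guaranteed by the conventions set up after Definition~\ref{def:rpert} and by \eqref{eqn:small2}; with that uniformity in hand the perturbation-of-identity argument applies verbatim. An alternative route, avoiding degree theory, is to note that by \eqref{eqn:invfolcs} and the commuting diagram \eqref{eqn:semisol} the projection $q : X \to Z$ restricted to leaves behaves well, and that $q^{-1}(z) \supset W^s_z$ together with disjointness and continuity forces equality and hence a partition; but the cleanest is the direct fixed-point/perturbation argument above.
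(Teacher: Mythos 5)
Your proposal is correct and takes essentially the same approach as the paper: disjointness is argued exactly as in Proposition~\ref{prop:disjointfibers} (forward iterates under the expanding map $\zeta$ cannot remain $O(\rho)$-close), and the covering rests on the same observation that $z \mapsto \beta^s_z(f,m)$ is a continuous map $O(\rho)$-close to the identity, which the paper exploits via a local index/isotopy argument on a $\delta$-disk rather than your global degree phrasing. The only caveat is that your sketched alternative via $q$ would be circular, since $q$ is defined only after this proposition, but you correctly rely on the direct perturbation-of-identity argument instead.
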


\begin{proof} The fact that the leaves are disjoint is proven analogously to
Proposition~\ref{prop:disjointfibers}. As for their union being the whole of $X$,
this is equivalent to the following claim: for any $z\in Z$ and $y\in F\times M$,
there exists $\widetilde{z}\in Z$ such that $\beta^s_{\widetilde{z}}(y)=z$. Let us fix $y$ and $z$, and prove this claim. \\

Fix a coordinate neighborhood of radius $2\delta$ of $z$ inside $Z$. Let
$D(z,\delta)$, $D(z,2\delta)$, $S(z,\delta)$, $S(z,2\delta)$ be the
balls/spheres centered at $z$ of radii $\delta$ and $2\delta $ in
$Z$, respectively. The map $f:D(z,\delta) \rightarrow D(z,2\delta)$
given by $f(\widetilde{z}) = \beta_{\widetilde{z}}(y)$ is
well-defined, because \eqref{eqn:small2} implies that
$d(f(\widetilde{z}),\widetilde{z})\leq \delta/2$. Moreover,
\eqref{eqn:hold0} implies that the map $f$ is continuous. Therefore,
sliding points along a straight line segment gives us an isotopy
between the identity map of $D(z,\delta)$ and $f$: $$h_t(\tilde z,y)
= (\tilde z,y) + t((\b_{\tilde z}(y) - \tilde z),0)$$
Because of $d(f(\widetilde{z}),\widetilde{z})\leq \delta/2$, the
image of the boundary sphere $h_t(S(z,\delta))$ never touches the
center $z$ during this isotopy. Therefore, the index of $z$ with
respect to the sphere $h_t(S(z,\delta))$ does not change during the
isotopy. Therefore $$ z\in \textrm{Im}(f) \Rightarrow \exists
\widetilde{z} \textrm{ such that } \beta^s_{\widetilde{z}}(y)=z. $$
\end{proof}

By Proposition~\ref{prop:whole phase space}, the map $q:X\rightarrow Z$
given by sending $W^s_z$ to $z$ is well-defined. Moreover, the $\G-$invariance condition
\eqref{eqn:invfolcs} implies that $q$ makes the diagram~\eqref{eqn:semisol} commute.
Part $b)$ of Theorem~\ref{thm:st} follows immediately from \eqref{eqn:small2} and \eqref{eqn:hold0}. \\

Finally, let us prove the relation $q|_Y=\pi\circ p$. Take any point
$b=(f,z)\in F \times Z$, and recall that we denote $z=\pi(b)$. If we
take the map $\beta^s_{\pi(b)}$ defining the global lamination (see
\eqref{eqn:glob}), and restrict it to the $\delta$ neighborhood of
$f\in F$, we obtain a map $\overline{\beta}_b^s$ as in
\eqref{eqn:leaf}. In other words restricting the leaves of the
global lamination $W^s_{\pi(b)}$ produces a valid local lamination
$\overline{W}^s_b$. Since the global lamination $W^s_{\pi(b)}$ is
$\G-$invariant, it is easily seen that the local lamination
$\overline{W}^s_b$ will also be $\G-$invariant. \\

But local laminations are unique, as proved in
Corollary~\ref{cor:fixed}!  Therefore, the local leaves
$\overline{W}^s_b$ coincide with the central-stable leaves $W^s_b$
of Section~\ref{sec:grtrans}. By the very definition of
$\overline{W}^s_b$, this implies that $W^s_b \subset W^s_{\pi(b)}$,
for all $b$. Since $W_b \subset W^s_b$ by construction, we conclude
that:

$$ W_b \subset W^s_{\pi(b)}, \textrm{ }\forall \textrm{ }b. $$ Now
take any point $x\in Y = \bigsqcup_{b \in \Lambda} W_b$, and assume
$x\in W_b$. By the definition of $p$, we have $p(x)=b$. But the
above inclusion implies that $x\in W^s_{\pi(b)}$, and then by the
definition of $q$, we have $q(x)=\pi(b)$. This precisely amounts to
saying that $q|_Y=\pi\circ p$. \end{proof}

\section{Fubini regained}
\label{sec:fubini}

In this section we prove Theorem~\ref{thm:fubini}. Moreover, at the
end of this Section we discuss the ``weak ergodic theorem'' that
appears in \cite{IKS08}.

\subsection {Measure zero and incomplete Hausdorff dimension}

Let us begin by recalling the concept of Hausdorff dimension,
denoted henceforth by $\textrm{dim}_H$.

\begin{Def} \label{def:haus} Let $A$ be a subset of a Euclidean
space. A \emph{cover} $U$ of $A$ is a finite or countable collection
of balls $Q_j$ of radii $r_j$ whose union contains $F$. The
$d$-dimensional volume of $U$, denoted by $V_d(U)$, is defined as $$
V_d(U) = \sum_j r_j^{d_j}. $$ The \hd of $A$ is defined as the
infimum of those $d$ for which there exists a  cover of $A$ with
arbitrarily small $d-$dimensional volume: $$ \emph{dim}_H\textrm{ }A
= \emph{inf} \{ d|\forall \e > 0 \textrm{ }\exists \emph{ a cover
}U\emph{ of }A\emph{ such that } V_d(U) < \e \} . $$ \end{Def}

Note that a compact manifold of dimension $d$ also has Hausdorff
dimension $d$. The same holds for a set of a positive Lebesgue
measure on the Riemannian \man of dimension $d$.
Theorem~\ref{thm:fubini} immediately follows from the following two
propositions:

\begin{Prop}
\label{prop:fubini1}
Recall the general setup of Theorem~\ref{thm:st}. If $A\subset Z$ satisfies
$$
\emph{dim}_H \textrm{ }A < \frac {\ln \mu}{\ln \mu_-}\cdot \emph{dim } Z,
$$
then for $\rho$ small enough, the set $q^{-1}(A)$ has Lebesgue measure 0 in $X$.
\end{Prop}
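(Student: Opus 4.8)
The plan is to cover the bad set $A \subset Z$ efficiently, pull the cover back through $q$, and estimate the Lebesgue measure of the pullback using the Hölder continuity of the central-stable leaves established in \eqref{eqn:hold0}. Fix $\e>0$. By the definition of Hausdorff dimension, choose $d$ with $\dim_H A < d < \frac{\ln \mu}{\ln \mu_-}\cdot \dim Z$ and a cover of $A$ by balls $Q_j \subset Z$ of radii $r_j$ with $\sum_j r_j^{d} < \e$. Since $q^{-1}(A) \subset \bigcup_j q^{-1}(Q_j)$, it suffices to bound $\sum_j \mathrm{mes}\, q^{-1}(Q_j)$ and let $\e \to 0$.

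The key step is estimating $\mathrm{mes}\, q^{-1}(Q_j)$ for a single ball $Q_j = D(z_j, r_j)$. By Proposition~\ref{prop:whole phase space}, $q^{-1}(Q_j) = \bigsqcup_{z \in Q_j} W^s_z$, and each $W^s_z$ is the graph of $\beta^s_z : F \times M \to Z$. The inverse image $q^{-1}(Q_j)$ is therefore contained in the set of points $(\beta^s_z(f,m), f, m)$ with $z \in Q_j$ and $(f,m) \in F\times M$. For fixed $(f,m)$, as $z$ ranges over $D(z_j, r_j)$, the first coordinate $\beta^s_z(f,m)$ ranges over a subset of $Z$ whose diameter is at most the diameter of $\{\beta^s_z(f,m) : z \in D(z_j,r_j)\}$; by \eqref{eqn:hold0}, $d(\beta^s_z(f,m), \beta^s_{z'}(f,m)) \le d(z,z')^{\alpha - O(\rho)}/O(\rho)^\alpha$ with $\alpha = \frac{\ln\mu}{\ln\mu_-}$, so this set has diameter $\le O(1)\cdot r_j^{\alpha - O(\rho)}$. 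Thus $q^{-1}(Q_j)$ is contained in a ``thickened graph'': over each $(f,m) \in F \times M$ its fiber in the $Z$-direction has measure $\le O(1)\cdot r_j^{\alpha - O(\rho)}$ (I would cover that fiber by $O(1)$ balls of radius $r_j^{\alpha-O(\rho)}$ to be careful about the manifold structure of $Z$). Integrating over $F \times M$, which has finite measure, gives
\[
\mathrm{mes}\, q^{-1}(Q_j) \le O(1)\cdot r_j^{\alpha - O(\rho)} = O(1) \cdot r_j^{(\alpha-O(\rho))}.
\]
Since $\dim Z$ may exceed $1$, the $Z$-direction is actually $(\dim Z)$-dimensional; more precisely $q^{-1}(Q_j)$ meets each leaf-transversal copy of $Z$ in a set of diameter $\le O(1) r_j^{\alpha-O(\rho)}$ in all $\dim Z$ directions, so in fact $\mathrm{mes}\, q^{-1}(Q_j) \le O(1)\cdot r_j^{(\alpha - O(\rho))\dim Z}$.

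Summing, $\sum_j \mathrm{mes}\, q^{-1}(Q_j) \le O(1)\sum_j r_j^{(\alpha - O(\rho))\dim Z}$. Now choose $\rho$ small enough that $(\alpha - O(\rho))\dim Z \ge d$; this is possible because $d < \alpha \dim Z$ strictly, and $O(\rho) \to 0$. Since the $r_j$ may be taken $\le 1$, we get $r_j^{(\alpha-O(\rho))\dim Z} \le r_j^{d}$, hence $\sum_j \mathrm{mes}\, q^{-1}(Q_j) \le O(1)\cdot \e$. As $\e$ was arbitrary, $\mathrm{mes}\, q^{-1}(A) = 0$.

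The main obstacle I expect is the bookkeeping in the measure estimate of $q^{-1}(Q_j)$: one must pass cleanly from the Hölder bound on the leaf maps $\beta^s_z$ — which is stated in a trivializing chart — to an honest Lebesgue-measure bound on the manifold $X = B \times M$, keeping track of all $\dim Z$ transverse directions and of the fact that the exponent degrades by $O(\rho)$. The Fubini-type argument (integrate the transverse slice measure over $F \times M$) is exactly the step where ``regaining Fubini'' happens: the central-stable foliation need not have absolutely continuous holonomy, but the Hölder control on leaves is strong enough that the union of a Hausdorff-small family of leaves is still Lebesgue-null. One should also double-check that $\rho$ can be chosen uniformly, i.e. depending only on $A$ through the gap $\alpha \dim Z - \dim_H A$ and on the neighborhood $\Omega$, which is what the $O(\rho)$ convention guarantees.
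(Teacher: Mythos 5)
Your argument is correct and is in substance the paper's own proof: both rest on the H\"older continuity \eqref{eqn:hold0} of $z\mapsto\beta^s_z(x)$ at a fixed transversal point $x\in F\times M$, combined with Fubini over the product structure $X=Z\times F\times M$, and both fix $\rho$ using only the gap between $\dim_H A$ and $\frac{\ln\mu}{\ln\mu_-}\dim Z$. The only difference is packaging: the paper fixes $x$ first, applies Falconer's lemma (Lemma~\ref{lem:haus}) to conclude that the slice $\{\beta^s_z(x):z\in A\}$ has Hausdorff dimension less than $\dim Z$ and hence Lebesgue measure zero in $Z$, whereas you unpack that lemma into an explicit covering argument, bounding $\mathrm{mes}\,q^{-1}(Q_j)$ ball by ball via the same slice estimate and summing at the end.
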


Now recall the particular setup of Theorem~\ref{thm:fubini}, which takes place over the
solenoid map. Note that in this case we have $Z=S^1$ and $\mu_-=\mu=\frac 12$.

\begin{Prop}
\label{prop:fubini2}
For any $\kappa>0$ and finite word $w$, there exists $\e=\e(\kappa,w)$ such that the
set $A_{\kappa,w}$ of Subsection~\ref{sub:fubini} has Hausdorff dimension at most $1-\e$.
\end{Prop}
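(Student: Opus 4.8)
The plan is to estimate $\dim_H A_{\kappa,w}$ directly from Definition~\ref{def:haus}, by constructing efficient covers of $A_{\kappa,w}$ by dyadic intervals of $S^1$. Fix a word $w$ of length $n$ and $\kappa>0$. For a positive integer $N$ (a multiple of $n$, say), a point $y=\overline{0.\omega^+}$ lies in $A_{\kappa,w}$ only if, for infinitely many $k$, the frequency $a_k(\omega^+,w)$ of occurrences of $w$ among the first $k$ digits of $\omega^+$ differs from $2^{-n}$ by more than $\kappa$. In particular, for each such $y$ there are arbitrarily large $k$ for which $\omega_0\dots\omega_{k-1}$ is a ``$\kappa$-deviant'' prefix. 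So $A_{\kappa,w}\subset \bigcap_{K}\bigcup_{k\ge K}\{\text{dyadic intervals of length }2^{-k}\text{ whose code is }\kappa\text{-deviant for }w\}$, and it suffices, for each $\e'>0$, to cover $A_{\kappa,w}$ by finitely or countably many dyadic intervals whose $(1-\e)$-volume is $<\e'$, for a fixed $\e=\e(\kappa,w)>0$.

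The heart of the matter is a counting estimate: the number $P(k)$ of binary words of length $k$ in which a fixed pattern $w$ occurs with frequency outside $[2^{-n}-\kappa,2^{-n}+\kappa]$ satisfies $P(k)\le 2^{(1-\e)k}$ for all large $k$, with $\e=\e(\kappa,w)>0$ independent of $k$. This is a standard large-deviations / entropy estimate. I would get it by partitioning $\omega_0\dots\omega_{k-1}$ into $k/n$ consecutive non-overlapping blocks of length $n$; whether or not each block equals $w$ is a Bernoulli$(2^{-n})$ trial, and the number of $w$-blocks controls (up to an $O(k/n)$ additive error that is negligible after dividing by $k$) the count of occurrences of $w$ as a subword. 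By Stirling / the Chernoff bound, the number of $\{0,1\}$-strings of block-length $k/n$ with block-frequency of $w$ bounded away from $2^{-n}$ by a fixed amount is at most $2^{n\cdot(k/n)\cdot(2^{-n}\log_2 2^n + \text{(binary entropy correction)})}$ --- more cleanly, it is $\le C^{k/n}$ for some $C<2^n$ depending only on $n,\kappa$, so that altogether $P(k)\le 2^{(1-\e)k}$. One must be slightly careful to pass from ``block-frequency of $w$'' to ``subword-frequency of $w$'', but a deviation of $a_k$ from $2^{-n}$ by $\kappa$ forces a deviation of the block-frequency by at least $\kappa/2$ once $k$ is large, since overlapping and block-boundary occurrences contribute a bounded density; this only shrinks $\e$.

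Given this, the cover is immediate. For each large $k$, the $\kappa$-deviant prefixes of length $k$ define at most $P(k)\le 2^{(1-\e)k}$ dyadic intervals of radius $2^{-k}$; taking all $k\ge K$ we get a countable cover $U_K$ of $A_{\kappa,w}$ with
\begin{equation*}
V_{1-\e/2}(U_K)\ \le\ \sum_{k\ge K} 2^{(1-\e)k}\cdot 2^{-(1-\e/2)k}\ =\ \sum_{k\ge K} 2^{-\e k/2}\ \xrightarrow[K\to\infty]{}\ 0 .
\end{equation*}
Hence $\dim_H A_{\kappa,w}\le 1-\e/2$, which is the assertion with $\e$ replaced by $\e/2$. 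The only real obstacle is the block-counting / large-deviation bound on $P(k)$ and the bookkeeping needed to relate subword-frequency to block-frequency; everything else is a direct application of Definition~\ref{def:haus}. (One should also remark that $A_{\kappa,w}$ here denotes the preimage in $S^1$ of the atypical symbolic sequences, and that $\dim Z=1$, so the bound $1-\e<\dim Z=1$ is exactly what feeds into Proposition~\ref{prop:fubini1} via $\mu_-=\mu=\tfrac12$.)
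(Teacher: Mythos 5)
Your overall strategy is exactly the paper's: cover $A_{\kappa,w}$ by the dyadic intervals of radius $2^{-N}$ determined by $\kappa,w$-atypical prefixes of length $N\ge N_0$, bound the number of such prefixes by $2^{N(1-\nu)}$, and sum the $(1-\varepsilon)$-volumes of the cover. The paper does not prove the counting bound itself; it quotes it as a Large Deviation Theorem (Theorem~\ref{thm:largedev}, citing \cite{V05}), and your covering/summation bookkeeping coincides with the paper's computation of $V_{1-\varepsilon}(U)$.

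Where you attempt to go beyond the paper --- proving the counting bound $P(k)\le 2^{(1-\varepsilon)k}$ by cutting the prefix into $k/n$ non-overlapping blocks --- there is a genuine gap. Your reduction asserts that a deviation of the subword frequency $a_k$ from $2^{-n}$ by $\kappa$ forces a deviation of the block frequency by at least $\kappa/2$, ``since overlapping and block-boundary occurrences contribute a bounded density.'' This is false: occurrences that are not aligned with block boundaries occupy a fraction $(n-1)/n$ of all starting positions, so they can carry the entire deviation while the aligned block frequency stays exactly at $2^{-n}$. Concretely, for $w=11$ the periodic sequence obtained by repeating the blocks $01,11,01,10$ has aligned block frequency of $w$ equal to $1/4=2^{-n}$, yet its subword frequency of $w$ is $3/8$; so for $\kappa=1/10$ it is $\kappa,w$-atypical while your block statistic shows no deviation at all. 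The fix is standard: up to $O(n)$ boundary terms, the number of occurrences of $w$ in the first $k$ digits is the sum over the $n$ residue classes of starting positions modulo $n$ of the aligned counts within each class, so a deviation of $a_k$ by $\kappa$ forces a deviation of at least (essentially) $\kappa$ of the aligned frequency in at least one residue class; applying your Chernoff/entropy count to each class separately and summing gives $P(k)\le n\,2^{(1-\varepsilon_0)k}\le 2^{(1-\varepsilon)k}$, after which your cover argument goes through and yields $\dim_H A_{\kappa,w}\le 1-\varepsilon/2$. Alternatively, you may simply invoke the large-deviation estimate as the paper does; the rest of your argument is correct and matches the published proof.
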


\subsection{Saving Fubini: the proof of Proposition~\ref{prop:fubini1}}

We are in the more general setup of Theorem~\ref{thm:st}. Since
$X=Z\times F \times M$, the classical Fubini theorem states that $$
\textrm{mes}(q^{-1}(A)\cap Z\times \{x\})=0, \forall x\in F \times M
\Rightarrow \textrm{mes}(q^{-1}(A))=0. $$ So all we need to do is to
show that for any fixed $x\in F \times M$, the intersection
$q^{-1}(A)\cap Z\times \{x\}$ has measure 0 in $Z$. By the very
definition of the map $q$ of \eqref{eqn:semisol}, this intersection
is nothing but the set $\{\beta^s_z(x)|z\in A\}\subset Z$.
Moreover, by statement b) of Theorem~\ref{thm:st} the map $$
\ph:Z\rightarrow Z, \textrm{ }\textrm{ }\textrm{ } \ph(z) =
\beta^s_z(x) $$ is \ho continuous with exponent $\alpha=\frac {\ln
\mu}{\ln \mu_-}-O(\rho)$. All that we need to prove is that the set
$\ph(A)$ has measure 0 in $Z$. The following general lemma will do
the trick:

\begin{Lem}[Falconer] \label{lem:haus} Let $Z$ be any Riemannian
manifold, and $A \subset Z$ a subset. If $\ph:Z\rightarrow Z$ is a
\ho map with exponent $\a$, then $$ \emph{dim}_H\textrm{ }\ph (A)
\le \frac {\emph{dim}_H \textrm{ }A} \a $$ \end{Lem}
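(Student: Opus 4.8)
The plan is to prove Falconer's lemma by a direct covering argument, exploiting only the defining property of a \ho map and the definition of \hd given in Definition~\ref{def:haus}. So suppose $\ph:Z\rightarrow Z$ satisfies $d(\ph(x),\ph(y))\le C\,d(x,y)^\a$ for all $x,y$ in a neighborhood, and fix any $d > \mathrm{dim}_H\,A$. By definition of \hd, for every $\e>0$ there is a countable cover $U=\{Q_j\}$ of $A$ by balls of radii $r_j$ with $\sum_j r_j^{d}<\e$ (and we may assume all $r_j$ are smaller than any prescribed threshold, since shrinking the threshold only shrinks the infimum).

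Next I would observe that the \ho estimate sends each ball $Q_j$ of radius $r_j$ into a ball $\ph(Q_j)$ of radius at most $C\,r_j^{\a}$ (centered at the image of the center of $Q_j$), so $\{\ph(Q_j)\}$ is a cover of $\ph(A)$. Computing its $(d/\a)$-dimensional volume:
$$
V_{d/\a}\bigl(\{\ph(Q_j)\}\bigr) = \sum_j \bigl(C r_j^{\a}\bigr)^{d/\a} = C^{d/\a}\sum_j r_j^{d} < C^{d/\a}\,\e.
$$
Since $C^{d/\a}$ is a fixed constant and $\e$ is arbitrary, this shows that $\ph(A)$ admits covers of arbitrarily small $(d/\a)$-dimensional volume, hence $\mathrm{dim}_H\,\ph(A)\le d/\a$ by the infimum in Definition~\ref{def:haus}. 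Letting $d\downarrow \mathrm{dim}_H\,A$ gives $\mathrm{dim}_H\,\ph(A)\le \mathrm{dim}_H\,A/\a$, as claimed.

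Two small technical points need attention. First, the \ho estimate is only local (it holds on a neighborhood, or with a possibly different global constant because $Z$ is compact and may not be convex); compactness of $Z$ lets one either take a uniform global \ho constant on all of $Z$ or cover $Z$ by finitely many charts and argue chart-by-chart, at the cost of an extra constant which does not affect the exponent $d/\a$. Second, one must make sure that when one refines the cover to have small radii the ball-image bound $\mathrm{diam}\,\ph(Q_j)\le C(\mathrm{diam}\,Q_j)^\a$ remains valid; again this is immediate from the \ho inequality applied to pairs of points in $Q_j$. I do not expect any real obstacle here — the lemma is essentially a one-line change of variables in the exponent — so the "hard part" is merely bookkeeping the passage from a local to a uniform \ho constant on the compact manifold $Z$, which is routine.
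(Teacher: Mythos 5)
Your argument is correct: it is the standard covering proof (images of the covering balls under a map satisfying $d(\ph(x),\ph(y))\le C\,d(x,y)^\a$ lie in balls of radii $C r_j^\a$, so the $(d/\a)$-dimensional volume of the image cover is at most $C^{d/\a}\sum_j r_j^{d}$, which can be made arbitrarily small whenever $d>\mathrm{dim}_H A$). The paper itself gives no proof and simply refers to Falconer's book, where exactly this covering argument appears, so your proposal coincides with the intended proof; the remarks on passing from a local to a uniform H\"older constant on a compact $Z$ are routine and do not affect the exponent.
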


The proof of this Lemma can be found in \cite{Fa}; the proof is
straightforward. The above Lemma and the assumptions of
Proposition~\ref{prop:fubini1} imply that for small enough $\rho$,
we will have $\textrm{dim}_H\textrm{ }\ph(A)<\textrm{dim }Z$.
Therefore, $\ph(A)$ has Lebesgue measure 0 in $Z$, and as we have
seen above this implies that $q^{-1}(A)$ has Lebesgue measure 0 in
$X$. This concludes the proof of Proposition~\ref{prop:fubini1}.

\subsection{Large deviations: the proof of Proposition~\ref{prop:fubini2}}

In this section, we must prove that for any $\kappa>0$ and finite
word $w$, the set $A_{\kappa,w}\subset S^1$ of
Subsection~\ref{sub:fubini} has Hausdorff dimension at most $1-\e$.
Call a finite word of length $N$ a $\kappa,w-$atypical word if the
frequency of appearances of $w$ in that word is outside the interval
$[2^{-n}-\kappa, 2^{-n}+\kappa]$. Obviously, if a sequence is
$\kappa,w-$atypical then infinitely many of its initial parts will
be $\kappa,w-$atypical words. Thus for any $N_0$, we have the
following inclusion: $$ \{\kappa,w-\textrm{atypical sequences}\}
\subset \bigcup^{N\geq N_0}\bigcup^ {\textrm{length }v= N}_{v
\textrm{ is }\kappa,w-\textrm{atypical}} \{\textrm{sequences
starting with }v\}. $$ Looking at the points of $S^1$ that
correspond in binary notation to these sequences, we have: $$
A_{\kappa,w} \subset \bigcup^{N\geq N_0}\bigcup^{\textrm{length
}v=N}_{v \textrm{ is }\kappa,w-\textrm{atypical}} \{\textrm{ball of
radius }2^{-N} \textrm{ around }\overline{0.v}\}. $$ This produces a
covering $U$ of the set $A_{\kappa,w}$, as in
Definition~\ref{def:haus}. Let us compute the $1-\e$ dimensional
volume of this covering: $$ V_{1-\e}(U)\leq \sum_{N\geq N_0}
2^{-N(1-\e)}\cdot \#\{\kappa,w-\textrm{atypical words of length
}N\}. $$ By Theorem~\ref{thm:largedev} below, we can estimate the
number of $\kappa,w-$atypical words of length $N$, thus obtaining $$
V_{1-\e}(U) \leq \sum_{N \geq N_0} 2^{-N(\nu-\e)}=\frac
{2^{-N_0(\nu-\e)}}{1-2^{\e-\nu}}. $$ If we choose $\e<\nu$ and let
$N_0\rightarrow \infty$, the above expression can be made
arbitrarily small. Therefore, the Hausdorff dimension of the set
$A_{\kappa,w}$ is at most $1-\e$. This concludes the proof of
Proposition~\ref{prop:fubini2} and of Theorem~\ref{thm:fubini},
modulo the following estimate:

\begin{Thm}[Large Deviation Theorem, \cite{V05}]
\label{thm:largedev} There exists $\nu=\nu(\kappa,w)$ such that for
any $N$ greater than some $N_0$, the number of $\kappa,w-$ atypical
words of length $N$ is at most $2^{N(1-\nu)}$. \end{Thm}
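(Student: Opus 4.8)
The plan is to reduce the counting estimate to a standard large-deviation bound for a Bernoulli-type process, via the Chernoff/exponential-moment method. Fix a finite word $w$ of length $n$ and $\kappa > 0$. For a word $v = v_1 v_2 \dots v_N$ of length $N$, let $N_w(v)$ denote the number of occurrences of $w$ inside $v$; $v$ is $\kappa,w$-atypical when $|N_w(v)/N - 2^{-n}| > \kappa$. Counting $\kappa,w$-atypical words of length $N$ is the same as estimating $2^N \cdot \textrm{mes}\{\omega^+ : |N_w(\omega^+_{1..N})/N - 2^{-n}| > \kappa\}$ under the $(\frac12,\frac12)$ Bernoulli measure, since each length-$N$ word has measure $2^{-N}$. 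So it suffices to show this probability is at most $2^{-N\nu}$ for some $\nu = \nu(\kappa, w) > 0$ and all $N \geq N_0$; then the number of atypical words is at most $2^N \cdot 2^{-N\nu} = 2^{N(1-\nu)}$, as claimed.

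The first step is to write $N_w(v) = \sum_{i=1}^{N-n+1} \xi_i$, where $\xi_i$ is the indicator that $v_i v_{i+1} \dots v_{i+n-1} = w$. Each $\xi_i$ is a $\{0,1\}$ random variable with mean $2^{-n}$, but the $\xi_i$ are \emph{not independent} — they are $n$-dependent (i.e. $\xi_i$ and $\xi_j$ are independent once $|i - j| \geq n$). The standard device is to split the index set $\{1, \dots, N-n+1\}$ into $n$ arithmetic progressions of common difference $n$; within each progression the variables $\xi_i$ \emph{are} mutually independent. Applying the classical one-dimensional large-deviation bound (Cram\'er's theorem / the Chernoff bound for i.i.d.\ bounded variables) to each of the $n$ progressions, and using the union bound to combine them, yields an estimate of the form $\textrm{mes}\{|N_w/N - 2^{-n}| > \kappa\} \leq 2n \cdot e^{-cN}$ where $c = c(\kappa, n) > 0$ is the Legendre transform of the log-moment-generating function at the appropriate deviation level. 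Converting the natural-log rate to base $2$ and absorbing the factor $2n$ for $N$ large gives the bound $2^{-N\nu}$ for a suitable $\nu > 0$ and all $N$ past some $N_0$.

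The main obstacle is purely the dependence structure: one cannot apply Cram\'er's theorem directly to $N_w(v)$. Handling this is routine but must be done carefully — the cleanest route is the decomposition into $n$ independent sub-collections described above; an alternative would be to use a standard large-deviation principle for functions of i.i.d.\ sequences, or a block/renewal argument, all of which give the same qualitative conclusion. Everything else (the exponential Markov inequality, optimizing over the exponential parameter, the union bound, the change of logarithm base) is elementary. Since this is a known result (the theorem is cited to \cite{V05}), I would keep the exposition brief, emphasizing only the $n$-dependence reduction and citing the classical Chernoff bound for the i.i.d.\ pieces; the explicit form of $\nu$ in terms of $\kappa$ and $n$ is not needed for the application to Proposition~\ref{prop:fubini2}, only its positivity.
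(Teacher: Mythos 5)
Your proposal is correct, and it is worth noting that the paper itself offers no proof of this statement at all: Theorem~\ref{thm:largedev} is simply quoted with a citation to \cite{V05}, and its only role is to feed the covering estimate in the proof of Proposition~\ref{prop:fubini2}. Your argument therefore does more than the paper does, and it is sound: identifying the count of $\kappa,w$-atypical words with $2^N$ times the Bernoulli probability of the deviation event is exact, the representation $N_w(v)=\sum_i \xi_i$ with $n$-dependent Bernoulli$(2^{-n})$ indicators is the right object, and the decomposition of the index set into $n$ arithmetic progressions of step $n$ does yield genuinely i.i.d.\ blocks (disjoint groups of digits), so Chernoff plus a union bound gives a probability bound $\le 2n\,e^{-cN}$ and hence the claimed $2^{N(1-\nu)}$ after absorbing constants for $N\ge N_0$. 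Two routine points you should make explicit if you write this out: the number of summands is $N-n+1$ rather than $N$, so the normalizations differ by $O(1/N)$ and you should run the argument with, say, $\kappa/2$ in place of $\kappa$; and in the union-bound step you need the pigeonhole observation that if the total count deviates by $\kappa N$ then at least one of the $n$ progression sums deviates by at least $\kappa N/n$ (up to the same $O(1)$ correction), which is what makes the per-block Chernoff rate $c(\kappa,n)>0$ applicable. Compared with the paper's bare citation to the general large-deviations literature, your elementary route buys a self-contained proof and, in principle, an explicit value of $\nu(\kappa,w)$, at the cost of a page of standard estimates; since only positivity of $\nu$ is used downstream, either treatment suffices.
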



\subsection{Weak ergodic theorem}   \label{subsec:weak}

For the sake of completeness, we formulate here the weak ergodic theorem of
\cite {IKS08}, whose proof is very closely related to the above material. \\

The classical ergodic theorem claims that for a given ergodic map
and a continuous function $\ph $, the set of points for which the
time average of $\ph $ either does not exist or is not equal to the
space average of $\ph $, has measure zero. Here we claim that, for
the duplication of a circle, for any fixed continuous function $\ph
\in C(S^1)$ and any $\delta > 0$, the set of points for which the
sequence of partial time averages of $\ph $ has a limit point that
differs from the space average of $\ph $ by more than $\delta$, has
Hausdorff dimension smaller than $1$. We expect that this theorem
may be generalized to any ergodic hyperbolic map of a compact
Riemannian manifold $M^n$ with a smooth invariant measure.

\begin{Thm}   \label{thm:weak} Let $\zeta$ be the duplication of the
circle $S^1 = \rr /\zz$, given by $\zeta(y)=2y$. Let $\ph \in
C(S^1)$ and $\d  > 0$ be given. The partial time averages of $\ph$
and its space average are defined as: $$ \ph_n(y) = \frac 1n
\sum_{i=0}^{n-1}\ph (\zeta^i(y)), \textrm{ }\textrm{ }\textrm{ } I =
\int_{S^1}\ph. $$ Then the set $$ K_{\ph ,\d } = \{ y| \textrm{ the
sequence } \ph_n(y) \textrm{ has a limit point outside } [I - \d , I
+ \d ] \} $$ has Hausdorff dimension smaller than 1. \end{Thm}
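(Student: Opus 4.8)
The plan is to follow the pattern of the proof of Proposition~\ref{prop:fubini2}: cover $K_{\ph,\d}$ by neighbourhoods of ``bad'' dyadic words and bound their number via the Large Deviation Theorem~\ref{thm:largedev}. The only new ingredient, relative to Section~\ref{sec:fubini}, is an initial approximation replacing the merely continuous $\ph$ by a locally constant function.

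First I would reduce to a locally constant observable. Since $S^1$ is compact, $\ph$ is uniformly continuous, so there are an integer $m$ and a function $\psi$ taking a constant value $c_w$ on each dyadic interval $\{\,y=\overline{0.w\dots}\,\}$ of length $2^{-m}$ (here $w$ runs over binary words of length $m$), with $\|\ph-\psi\|_{C^0}<\d/4$. Along every orbit this gives $|\ph_n(y)-\psi_n(y)|\le\|\ph-\psi\|_{C^0}<\d/4$, and $|I-I_\psi|<\d/4$, where $I_\psi:=\int_{S^1}\psi=\sum_{|w|=m}c_w2^{-m}$. Hence, if $y\in K_{\ph,\d}$, then $\psi_n(y)$ has a limit point at distance $>\d/2$ from $I_\psi$, and therefore $|\psi_n(y)-I_\psi|\ge\d/4$ for infinitely many $n$; it suffices to bound the Hausdorff dimension of the set $\widetilde K$ of $y$ with this last property. (If $\psi\equiv0$ the same estimates give $K_{\ph,\d}=\emptyset$, so we may assume $\|\psi\|_{C^0}>0$.)

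Next I would pass to words. For $\zeta(y)=2y$ the iterate $\zeta^i(y)$ shifts the binary expansion of $y$, so $\psi(\zeta^i(y))=c_{y_iy_{i+1}\cdots y_{i+m-1}}$ and $\psi_n(y)=\frac1n\sum_{i=0}^{n-1}c_{y_i\cdots y_{i+m-1}}$ depends only on the first $n+m-1$ digits of $y$. Call a binary word $v$ of length $N$ \emph{bad} if $\bigl|\frac{1}{N-m+1}\sum_{i=0}^{N-m}c_{v_i\cdots v_{i+m-1}}-I_\psi\bigr|\ge\d/4$; if $y\in\widetilde K$ then for each $n$ with $|\psi_n(y)-I_\psi|\ge\d/4$ the prefix of $y$ of length $n+m-1$ is bad, so $y$ has infinitely many bad prefixes. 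Exactly as in Proposition~\ref{prop:fubini2}, this produces, for every $N_0$, a cover of $\widetilde K$ by the balls of radius $2^{-N}$ around $\overline{0.v}$ over all $N\ge N_0$ and all bad words $v$ of length $N$. To count bad words, write $a_N(v,w)$ for the frequency of the length-$m$ word $w$ inside $v$; since $\sum_w a_N(v,w)=1$ and $I_\psi=\sum_w c_w2^{-m}$, a bad $v$ satisfies $\bigl|\sum_w c_w\bigl(a_N(v,w)-2^{-m}\bigr)\bigr|\ge\d/4$, so by the pigeonhole principle some $w$ has $|a_N(v,w)-2^{-m}|\ge\ka$ with $\ka:=\d/(2^{m+2}\|\psi\|_{C^0})>0$. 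Thus, for $N$ large, every bad word of length $N$ is a $\ka/2,w$-atypical word of length $N$, in the sense of the proof of Proposition~\ref{prop:fubini2}, for one of the finitely many $w$. Applying Theorem~\ref{thm:largedev} to each $w$ and putting $\nu:=\min_{|w|=m}\nu(\ka/2,w)>0$, the number of bad words of length $N$ is at most $2^m\cdot2^{N(1-\nu)}\le2^{N(1-\nu/2)}$ for $N$ large. Feeding this into the cover, the $(1-\e)$-dimensional volume is $\le\sum_{N\ge N_0}2^{-N(1-\e)}2^{N(1-\nu/2)}=\sum_{N\ge N_0}2^{-N(\nu/2-\e)}$, which for $0<\e<\nu/2$ tends to $0$ as $N_0\to\infty$; hence $\textrm{dim}_H K_{\ph,\d}\le\textrm{dim}_H\widetilde K\le1-\e<1$.

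The real input is the Large Deviation Theorem~\ref{thm:largedev}, which we are entitled to cite; granting it, the only genuinely new work is the approximation of $\ph$ by a locally constant $\psi$, and the main thing to watch is that the approximation error $\d/4$ stays well below the gap $\d$, so that ``atypicality at scale $\ka$'' survives the passage from $\ph$ to $\psi$. A harmless remark: dyadic rationals have two binary expansions, but they form a countable set and so do not affect the Hausdorff dimension.
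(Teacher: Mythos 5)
Your argument is correct. Note, however, that the paper does not actually prove Theorem~\ref{thm:weak}: it only formulates it ``for the sake of completeness,'' attributes it to \cite{IKS08}, and remarks that its proof is ``very closely related to the above material.'' Your proposal is exactly the natural implementation of that remark: it reuses the covering-plus-large-deviations scheme of Proposition~\ref{prop:fubini2} (balls of radius $2^{-N}$ around bad dyadic prefixes, counted via Theorem~\ref{thm:largedev}), and the genuinely new ingredients --- uniform approximation of $\ph$ by a function constant on dyadic intervals of length $2^{-m}$, the translation of partial time averages into window frequencies of length-$m$ words, the pigeonhole reduction from a deviation of the weighted sum to $\ka$-atypicality of at least one of the $2^m$ words, and the adjustment between the two normalizations of frequency (denominator $N$ versus $N-m+1$) --- are all handled correctly, including the margins $\d/4<\d/2<\d$ and the countable set of dyadic points. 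So where the paper offers only a citation, your write-up supplies a complete proof along the route the paper indicates.
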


A similar theorem for Anosov \diffeos of the two-torus was proved
recently by Saltykov \cite{S09}.

\section{Appendix}
\label{sec:app}

\subsection{The graph transform map made explicit}

Recall that the graph transform map $\g_b:\B^s \rightarrow \B^s$ was defined by $\b^s \rightarrow \overline{\b}^s$, where:

$$ \{\G^{-1}(\ph_{h(b)}(x_s,\beta^s(x_s,m)),m)\} \supset
\{(\ph_b(x_s,\overline{\beta}^s(x_s,m)),m)\}. $$ We now want to turn
this implicit definition into an explicit formula. Recall our
notation $\gamma(\beta)$, under which the above becomes: $$
\textrm{Im } \G^{-1}\circ (\ph_{h(b)}\times \textrm{Id}) \circ
\gamma(\beta^s) \supset \textrm{Im } (\ph_{b}\times
\textrm{Id})\circ \gamma(\overline{\beta}^s). $$ If we write $\G_b
={(\ph_{h(b)}(C\delta)\times \textrm{Id})}^{-1} \circ \G \circ
(\ph_b(\delta) \times \textrm{Id})$ as in \eqref{eqn:hcoord}, then
our relation takes the form:

\begin{equation} \label{eqn:implicit1} \textrm{Im }\G_b^{-1} \circ
\gamma(\beta^s) \supset \textrm{Im } \gamma(\overline{\beta}^s).
\end{equation} Write $\pi_u:Q^s \times Q^u \times M \rightarrow Q^u$
and $\pi_{sc}:Q^s\times Q^u\times M \rightarrow Q^s \times M$ for
the standard projections, and define:

\begin{equation} \label{eqn:Gbb} G_{\overline\b ,b} = \pi_{sc} \circ
\G_b \circ \ga (\overline{\b}^s):Q^s \times M \rightarrow Q^s \times
M. \end{equation} Then \eqref{eqn:implicit1} is equivalent to:

\begin{equation} \label{eqn:graph} \pi_u \circ \G_b^{-1} \circ
\gamma (\b^s) \circ G_{\overline\b, b} = \overline{\b}^s. \end{equation}

\begin{Prop} \label{prop:composition} The composition
\eqref{eqn:Gbb} is well-defined and $$ \emph{Lip }
G_{\overline\beta,b}\leq (L + O(\delta))\cdot (1+\emph{Lip
}\overline\b^s), $$ where $L$ is the constant from
Definition~\ref{def:domsplit}. A similar estimate holds in the
central-unstable case. \end{Prop}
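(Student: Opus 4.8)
The plan is to unwind the definition $G_{\overline\beta,b} = \pi_{sc}\circ \G_b \circ \gamma(\overline\beta^s)$ and estimate its Lipschitz constant factor by factor. First I would observe that $\gamma(\overline\beta^s):Q^s\times M \to Q^s\times Q^u\times M$ is the map $(x_s,m)\mapsto (x_s,\overline\beta^s(x_s,m),m)$, so its Lipschitz constant is at most $1+\mLip\overline\beta^s$ (the ``1'' coming from the identity components in $x_s$ and $m$, the rest from $\overline\beta^s$). Next, $\G_b$ is a $\rho$-perturbation of $\F_b$, and in the coordinate charts $\F_b$ acts as $(h^{-1})_b$ in the base directions and as a fiber map; by \eqref{eqn:esth} together with \eqref{eqn:hypstructure} the differential of $(h^{-1})_b$ is block-diagonal up to $O(\delta)$, with blocks bounded by $\lambda^{-1}$ (wait — we must be careful about which direction: $\G_b$ moves forward, so its base part has norm $\le \max(\lambda,\mu^{-1})$ in the relevant pieces). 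More robustly: by the modified dominated splitting condition \eqref{eqn:dom} and \eqref{eqn:perturbation}, $\mLip \G_b \le L + O(\delta) + O(\rho) = L + O(\delta)$ since $\rho = O(\delta)$. Finally $\pi_{sc}$ is a coordinate projection, hence $1$-Lipschitz, so composing the three bounds gives $\mLip G_{\overline\beta,b} \le (L+O(\delta))(1+\mLip\overline\beta^s)$, which is exactly the claim.

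Before the Lipschitz estimate I must check that the composition is \emph{well-defined}, i.e. that $\G_b\circ\gamma(\overline\beta^s)$ actually lands in the domain $Q^s\times Q^u\times M$ on which $\pi_{sc}$ and the subsequent constructions make sense, and more precisely that $\pi_{sc}\circ\G_b\circ\gamma(\overline\beta^s)$ maps $Q^s\times M$ into itself (so that it can serve as the base-change homeomorphism in \eqref{eqn:graph}). For this I would use that $\overline\beta^s$ satisfies \eqref{eqn:max}, so $\gamma(\overline\beta^s)(Q^s\times M)$ lies in a $\delta/2$-neighborhood of $Q^s\times\{0\}\times M$; since the charts in \eqref{eqn:hcoord} are taken with the enlarged radius $C\delta$ with $C>\max(\lambda_-^{-1},\mu_-^{-1})$, the image under $\G_b$ (which contracts the $Q^s$-direction by roughly $\mu<1$ and is a small perturbation of $(h^{-1})_b$) stays inside the domain, and its $\pi_{sc}$-projection stays inside $Q^s\times M$. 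That $G_{\overline\beta,b}$ is moreover a homeomorphism (needed in \eqref{eqn:graph} and later in Proposition~\ref{prop:last}) follows because it is a Lipschitz perturbation of the identity on $Q^s\times M$: its base-direction part is $\pi_s\circ(h^{-1})_b$ plus $O(\delta)+O(\rho)$ corrections, which is invertible with Lipschitz inverse for $\delta$ small.

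The central-unstable case is identical after swapping the roles of $s$ and $u$, $\mu$ and $\lambda$, and $h$ and $h^{-1}$; the constant $L$ is symmetric in $\lambda,\mu$ by \eqref{eqn:dom}, so the same bound results. I expect the only genuinely delicate point to be the well-definedness: one has to keep careful track of which radius ($\delta$, $C\delta$, $3\delta$) each chart is defined on, and verify that $\G_b$ applied to the graph of $\overline\beta^s$ does not escape the chart before projecting. The Lipschitz bound itself is then a routine product of three elementary factors and the invocation of \eqref{eqn:dom}, \eqref{eqn:esth} and \eqref{eqn:perturbation}.
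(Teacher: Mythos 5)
Your factor-by-factor bound breaks down at the middle factor: the claim ``$\mLip \G_b \le L+O(\delta)$'' is false in general. In the chart, $\F_b$ acts on the base coordinates as $h_b$ (the \emph{forward} map), whose unstable block $A^u$ has norm of order $\mu_-^{-1}$; since the modified dominated splitting condition \eqref{eqn:dom} forces $L<\min(\lambda^{-1},\mu^{-1})\le\mu_-^{-1}$, the full map $\G_b$ has Lipschitz constant at least about $\mu_-^{-1}-O(\delta)$, which strictly exceeds $L$. So composing $\mLip\pi_{sc}\cdot\mLip\G_b\cdot\mLip\gamma(\overline\beta^s)$ only yields $(\mu_-^{-1}+O(\delta))(1+\mLip\overline\beta^s)$, not the stated bound, and this weaker bound is useless later: in the proof of Lemma~\ref{lem:1} the invariance of the Lipschitz ball hinges on $\mu L<1$ from \eqref{eqn:dom}, whereas $\mu\cdot\mu_-^{-1}$ can equal $1$ (e.g.\ the solenoid, where $\mu=\mu_-$). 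The first, ``directional'' half of your paragraph was the right instinct, but it is muddled about orientation ($G_{\overline\beta,b}$ uses the forward map $\G_b$, not $\G_b^{-1}$; the stable base direction is contracted by roughly $\lambda$, not $\mu$) and you then abandon it for the incorrect global bound.

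The point you must use is that $\pi_{sc}$ \emph{discards} the expanding unstable output component before any Lipschitz constant is taken, so only the stable base block and the fiber maps contribute --- and that is exactly what $L$ measures. The paper does this by comparing $G_{\overline\beta,b}$ with the model map $F_{0,b}=\pi_{sc}\circ\F_b\circ\gamma(0)$, obtaining
$\mLip G_{\overline\beta,b}\le(\mLip F_{0,b}+O(\rho))(1+\mLip\overline\beta^s)$ from $d(\G,\F)_{C^1}\le\rho$, and then computing explicitly from the skew-product structure that $F_{0,b}(x_s,m)=(\pi_s\circ h_b(x_s,0),\,f_{(x_s,0)}(m))$, whence $\mLip F_{0,b}\le L+O(\delta)$ by \eqref{eqn:esth} and the definition of $L$. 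To repair your argument you would either follow that route or redo your block computation for the composed map $(x_s,m)\mapsto\pi_{sc}\circ\G_b(x_s,\overline\beta^s(x_s,m),m)$, noting that the dangerous unstable block only enters multiplied by $\partial\overline\beta^s$ or killed by $\pi_{sc}$. Your discussion of well-definedness (staying inside the $C\delta$-charts) is sensible and in the spirit of the paper, but it does not rescue the Lipschitz estimate as written.
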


\begin{proof} Define the composition $$ F_{0 ,b} = \pi_{sc} \circ
\F_b \circ \ga (0), $$ in analogy with \eqref{eqn:Gbb}, with $\G$
replaced by $\F$ and $\overline{\beta}^s$ replaced by the zero map
$0:Q^s\times M \rightarrow Q^u$. Since $d(\G,\F)_{C^1}\leq \rho$, we
see that: \begin{equation} \label{eqn:qqq} \mLip G_{\beta,b} \leq
(\mLip F_{0,b}+O(\rho))\cdot (1+\textrm{Lip }\overline\b^s)
\end{equation} But one can simply unravel the definition of
$F_{0,b}$ when $\F$ is a skew product, and obtain $$
F_{0,b}(x_s,m)=(\pi_s\circ h_b(x_s,0), f_{(x_s,0)}(m)). $$ From this
it is clear that $$ \mLip F_{0,b} \leq L + O(\delta). $$ Recalling
that we always choose $\delta=O(\rho)$, \eqref{eqn:qqq} implies: $$
\mLip G_{\overline{\beta},b} \leq (L + O(\delta))\cdot (1+\textrm{Lip
}\overline\b^s). $$ \end{proof}

\begin{Prop}
\label{prop:last}
For any two central-stable leaves $\overline\b^s_0, \overline\b^s_1 \in \B^s$, we have:
$$
||G_{\overline\b_0,b} - G_{\overline\b_1,b}|| \leq O(1)\cdot ||\overline\beta^s_0-\overline\beta^s_1||.
$$
A similar result holds in the central-unstable case.
\end{Prop}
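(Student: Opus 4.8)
The plan is to unravel the definition~\eqref{eqn:Gbb} of $G_{\overline\b,b}$ and estimate the difference of the two compositions using the Lipschitz bounds on $\G_b$ and the smallness of the leaves. Recall that $G_{\overline\b_i,b} = \pi_{sc}\circ\G_b\circ\gamma(\overline\b^s_i)$, so the only dependence on the leaf is through $\gamma(\overline\b^s_i):Q^s\times M\to Q^s\times Q^u\times M$, which sends $(x_s,m)\mapsto(x_s,\overline\b^s_i(x_s,m),m)$. Hence $$ \|G_{\overline\b_0,b}-G_{\overline\b_1,b}\| \le \mLip(\pi_{sc}\circ\G_b)\cdot\|\gamma(\overline\b^s_0)-\gamma(\overline\b^s_1)\|. $$ The second factor is exactly $\|\overline\b^s_0-\overline\b^s_1\|$, since the two graph maps agree in the $x_s$ and $m$ coordinates and differ only in the $Q^u$-coordinate by $\overline\b^s_0-\overline\b^s_1$. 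For the first factor, $\pi_{sc}$ is $1$-Lipschitz, while $\mLip\G_b\le\mLip\F_b+O(\rho)$ by~\eqref{eqn:perturbation}; and $\mLip\F_b$ is bounded in terms of the $C^1$-data of $\F$ (the constant $L$ of Definition~\ref{def:domsplit} together with the $O(\delta)$ coming from~\eqref{eqn:esth}), hence is $O(1)$ in the sense of the notational convention. This gives $\|G_{\overline\b_0,b}-G_{\overline\b_1,b}\|\le O(1)\cdot\|\overline\b^s_0-\overline\b^s_1\|$, as claimed.

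There is one point that needs a little care: $\G_b$ is only a representative of a germ with a bounded domain (the radius-$C\delta$ chart in~\eqref{eqn:hcoord}), so I must check that $\gamma(\overline\b^s_i)$ actually lands inside the domain of $\G_b$ so that the composition makes sense. This is exactly the content of the condition~\eqref{eqn:max} defining $\B^s$ (the leaves are $\delta/2$-small with controlled Lipschitz constant), together with the choice $C>\max(\lambda_-^{-1},\mu_-^{-1})$, and it is the same well-definedness statement already invoked in Proposition~\ref{prop:composition}; I will simply refer to that. The central-unstable case is identical after interchanging the roles of $Q^s$ and $Q^u$ (and of $h$, $h^{-1}$), so I will say "a similar result holds" and not repeat the argument.

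The main obstacle — such as it is — is purely bookkeeping: making sure the constant hidden in $O(1)$ genuinely depends only on the neighborhood $\Omega$ and not on $b$, $\rho$, or the particular leaves. This follows because $\mLip\F_b$ is controlled uniformly in $b\in\Lambda$ by the $C^1$-norm of $\F$ (the family $f_b$ being $C^1$), the $O(\rho)$ correction is uniform by Definition~\ref{def:rpert}, and the $O(\delta)$ terms from the charts are uniform by compactness of $B$; so the whole estimate is uniform. There is no genuine analytic difficulty here — the proposition is essentially the observation that post-composition with a fixed Lipschitz map and the shift $(x_s,m)\mapsto(x_s,\overline\b^s_i(x_s,m),m)$ are both Lipschitz operations in the $C^0$-norm.
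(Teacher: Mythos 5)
Your argument is correct and is essentially the paper's own proof: both bound the difference by $\mLip(\pi_{sc}\circ\G_b)\cdot\|\ga(\overline\b^s_0)-\ga(\overline\b^s_1)\|$, identify the second factor with $\|\overline\b^s_0-\overline\b^s_1\|$, and observe the Lipschitz constant is $O(1)$. Your extra remarks on well-definedness of the composition and uniformity of the constant are fine but go beyond what the paper writes.
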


\begin{proof} We have: $$ ||G_{\overline\b_0,b} -
G_{\overline\b_1,b}||=||\pi_{sc}\circ \G_{b} \circ
\ga(\overline\b^s_0) - \pi_{sc}\circ \G_{b} \circ
\ga(\overline\b^s_1)||\leq $$ $$ \leq \textrm{Lip }(\pi_{sc}\circ
\G_{b})\cdot ||\ga(\overline\b^s_0) - \ga(\overline\b^s_1)|| \leq
O(1)\cdot ||\overline\beta^s_0-\overline\beta^s_1||. $$

\end{proof}

\subsection{Persistence of \ho skew products}

The second, independent technical result that we will prove concerns
the setup of Theorem~\ref{thm:mt}: we have a small
$\rho-$perturbation $\G$ of the skew product $\F$ from
Theorem~\ref{thm:mt}. This theorem tells us that $\G$ is conjugated
to a skew product $G$: $$ G(b,m)=(h(b), g_b(m)). $$ In this
Subsection, we will prove formulas \eqref{eqn:close} and
\eqref{eqn:holfibermaps}. To this end, from the very definition of
$G$ we have the following explicit formula for the fiber maps $g_b$:
\begin{equation} \label{eqn:fibermapg} g_b(m)=\pi_m
(\G(\widetilde{\beta}_b(m),m)), \textrm{ }\textrm{ }\textrm{ }
g_b^{-1}(m)=\pi_m(\G^{-1}(\widetilde{\beta}_{h(b)}(m),m)) \end{equation} where
$\pi_m:X=B\times M \rightarrow M$ is the standard projection.
Obviously, we have $$ f_b(m)=\pi_m(\F(b,m)), \textrm{ }\textrm{
}\textrm{ } f_b^{-1}(m)=\pi_m(\F^{-1}(h(b),m)). $$ Since $d(\G^{\pm
1}, \F^{\pm 1})_{C^1}<\rho$, it follows from the above formulas that
$$ d(g_b,f_b)_{C^1} \leq d(\G(\widetilde{\beta}_b(m),m), \F(b,m))_{C^1} \leq $$
$$ \leq d(\G(\widetilde{\beta}_b(m),m), \G(b,m))_{C^1}+\rho \leq
||\G||_{C^1}\cdot d(\widetilde{\beta}_b,b)_{C^1}+\rho = O(\rho), $$ and
similarly for $d(g_b^{-1}, f_b^{-1})_{C^1}$. This proves
\eqref{eqn:close}. As for the \ho property, we have that $$ d(g_b,
g_{b'})_{C^0}\leq ||\G||_{C^1} \cdot d(\widetilde{\beta}_b, \widetilde{\beta}_{b'})_{C^0}
\leq O(d(b,b')^\a), $$ by \eqref{eqn:hol}. The statement concerning
$d(g_b^{-1}, g_{b'}^{-1})_{C^0}$ is proved analogously, thus
concluding the proof of \eqref{eqn:holfibermaps}.

\subsection{Acknowledgments}

We are grateful to A.Bufetov, A.Klimenko, C.Pugh, M.Shub for
fruitful discussions. The second author would like to thank the Max
Planck Institut f\"ur Mathematik in Bonn for hosting him while a
significant portion of this work was written.

\end{document}